\newtheorem{theorem}{Theorem}[section]
\newtheorem{lemma}[theorem]{Lemma}
\newtheorem{remark}[theorem]{Remark}
\newtheorem{example}{Example}[section]
\newtheorem{definition}{Definition}[section]
\providecommand{\abs}[1]{\lvert#1\rvert}
\def\<<{``}
\def\P{\mathbb{P}}
\begin{document}

\begin{frontmatter}
	
	\title{Strong approximation of density dependent Markov chains on bounded domains}
	\runtitle{Strong approximation of DDMC on bounded domains}
	
	
	\author{\fnms{Enrico} \snm{Bibbona}\ead[label=e1]{enrico.bibbona@polito.it}}
	\address{DISMA, Politecnico di Torino\\
		Corso Duca degli Abruzzi, 24 \\
		10129 Torino (TO), Italy\\		
		\printead{e1}}
	\affiliation{Politecnico di Torino, Italy}
	\and
	\author{\fnms{Roberta} \snm{Sirovich}\ead[label=e2]{roberta.sirovich@unito.it}}
	\address{Dipartimento di Matematica ``G. Peano''\\
		Universit\`a di Torino\\
		via Carlo Alberto, 10\\
	 10123 Torino, Italy\\
	 \printead{e2}}
	\affiliation{Universit\`a di Torino, Italy}
	
	\runauthor{E. Bibbona, R. Sirovich}
	
	\begin{abstract}
	Density dependent families of Markov chains, such as the stochastic models of mass-action chemical kinetics, converge for large values of the indexing parameter $N$ to deterministic systems of differential equations \citep{kurtz1ode}. Moreover for moderate $N$ they can be strongly approximated by paths of a diffusion process \citep{kurtz1976limit}. Such an approximation however fails if the state space is bounded (at zero or at a constant maximum level due to conservation of mass) and if the process visits the boundaries with non negligible probability. We present a strong approximation by a jump-diffusion process which is robust to this event. The result is illustrated with a particularly hard case study.
	\end{abstract}
	
	\begin{keyword}[class=MSC] 
		\kwd{60J28}
		\kwd{60J70}
		\kwd{60J75}
		\kwd{60F15}
		\kwd{92C42}
\end{keyword}	

\begin{keyword}
	\kwd{Chemical reaction networks}
	\kwd{Strong approximations}
	\kwd{density dependent Markov chains}
	\kwd{bounded domains}
\end{keyword}
	
\end{frontmatter}

\section{Introduction}

Density dependent Markov chains are widely used models in many fields. Their origin is in population dynamics, but nowadays the most popular application is to chemical reaction networks. In systems biology indeed networks of reactions that take place within a single cell are of high interest and in such a small scale, stochastic effects are commonly observed and the deterministic models that are largely predominant for chemical systems of larger size may fail to catch relevant properties. The coexistence of stochastic and deterministic models of mass-action chemical kinetics is at the basis of some works by Tom Kurtz \cite{kurtz1ode, KurtzChemPhys}, who first clarified the relation among them. Stochastic models are indeed defined as families of Markov chains that are indexed by a parameter usually interpreted as the volume of reaction. For large values of such parameter they converge to their deterministic counterparts. The convergence holds on an arbitrarily large, but finite, time interval $[0,T]$. Despite this fact, it is still possible to find numerous examples where the behavior of the stochastic models is substantially different with respect to the deterministic one. Many such examples are discussed in the recent reviews \cite{ErdiLente,andersonKurtzBook}.
It is particularly worth to notice that the asymptotic distribution of the stochastic models is not always guaranteed to concentrate around the deterministic equilibrium. It happens in some important examples (cf. \cite{andersonCKstationary,danielecarsten}), but there are entire families of reaction networks where the long-term behavior is dramatically different if modeled deterministically or with stochastic processes \cite{andersonencisojohnston}.

Another important class of examples where stochastic and deterministic models may differ substantially is when the state space of the system is bounded and boundaries are visited with non-negligible probability. It is the case of some reaction networks, indeed the concentrations cannot become negative and in some case they also cannot grow arbitrarily large by a conservation of mass constraint.
 It has been shown in many networks (cf. e.g. \cite{srivastava, TogashiPRL,ATPN14, angius2015approximate}), that the state prescribed by the deterministic equations at a given time may not correspond to a state around which the probability mass is concentrated in the corresponding stochastic model even if the indexing parameter is moderately large.
 Clearly, a purely deterministic model cannot catch effects that are only caused by the presence of noise, such as the reaching of the boundaries of the state space when the mean is in the interior. Thus different approximations that retain some stochasticity could certainly be more accurate. Kurtz himself, cf. \cite{kurtz1976limit,kurtzbook}, showed that such density dependent Markov chain models can be strongly approximated with path of diffusion processes. Again, however, if state space is bounded and if the boundaries are frequently visited, such an approach fails:  the diffusion approximation is indeed valid only up to the first visit to the boundary and then it becomes ill-posed.
 To overcome this problem at least two solutions have been proposed.
 In \cite{ATPN14, angius2015approximate} a jump diffusion approximation was investigated in a few case studies. It was shown, by simulations in most cases and by numerical solution of the Kolmogorov equations in a one dimensional case, that both a good agreement with the original Markov chain model and a significant computational advantage may be achieved. 
 However, no mathematical study of the properties of  the approximation was attempted there, and only some numerical evidence in specific networks was provided.
 In \cite{ruth2017constrained} a similar but different sequence of approximating jump diffusion processes  was proved to converge to a reflected diffusion process. The result was also illustrated by some simulation study and in the last part of this paper we replicate one of their examples.
 A further method  was introduced in \cite{complex} in some special case by extending the domain of the diffusion approximation to the complex plain.
 In this paper we aim at showing that the idea on which the jump diffusion approximation first proposed in \cite{ATPN14} can be actually formalized and turned into a strong approximation theorem which provides us with the same rate of approximation that was found by Kurtz in the case of diffusion approximation. However the newly constructed jump-diffusion process is able to approximate the original Markov chain correctly even after many visits to the boundary, while the diffusion approximation cannot be extended after the first hitting time to the boundary. The result is illustrated by a couple of a few case studies which are particularly complex from the numerical point of view.
\section{Background material} \label{sec:background}

In the Introduction we have mentioned density dependent families of Markov chains as models for population dynamics. Besides the range of applicability of such processes is much wider, it is convenient to keep such example in mind since the nomenclature originated in such setting. We have also mentioned that when populations are spread over large areas, their densities are well approximated by ODE systems.

 To find the precise relation to stochastic population models and their deterministic ODE counterparts a more general and formal definition of density dependence is needed. In this section we summarize some classic results due to Tom Kurtz \citep{kurtz1ode, kurtz1976limit, kurtz1978strong}. Except for some specific issue that we comment below, we follow the systematic presentation in the book \cite{kurtz}[Chapter 11], where some of the original results have been extended and generalized and where the reader can find every detail that we are going to skip.
 Let 
	\begin{definition}
	A family of continuous time Markov Chains $\{Y^{[n]}(t)\}$ indexed by a parameter $n$ and with state spaces contained in $\mathbb{Z}^d$ is called \emph{density dependent} if its transition rates $q^{[n]}_{k,k+l}$ from any state $k$ to any other state $k+l$ can be written in the following form
	\begin{equation}\label{rates}
	q^{[n]}_{k,k+l}=n\; f_l \left(\frac{k}{n}\right)
	\end{equation}
	with $f_l$ non-negative functions defined on some subset of $\mathbb{R}^d$.
	\end{definition}

In the case population dynamics, the process $Y^{[n]}(t)$ usually represents the population size and the indexing parameter $n$, the area over which the population is spread. It is common practice to rescale the populations into population densities, and the general analogue of such procedure is introduced below.

	\begin{definition}
		For a density dependent family $\{Y^{[n]}(t)\}$ we define the family of \emph{density processes} $\{X^{[n]}(t)\}$ by setting for every $n$
		\begin{equation} X^{[n]}(t)=\frac{Y^{[n]}(t)}{n}. \label{density} \end{equation}
	\end{definition}

Each density process of a density dependent family can be written with the following representation
\begin{align}
X^{[n]}(t) &= X^{[n]}(0)+\sum_{l} \frac{l}{n} N_{l}\left[ n \int_{0}^{t} f_{l}\left(X^{[n]}(s)\right) ds\right]\notag \\
&=X^{[n]}(0)+ \sum_{l} \frac{l}{n} \left\{n \int_0^t f_l\left(X^{[n]}(s)\right) ds+ \tilde N_{l}\left[ n \int_{0}^{t} f_{l}\left(X^{[n]}(s)\right) ds\right]\right\}.\label{mc}
\end{align}
where the $N_l(t)$ are independent unit-rate Poisson processes that count the occurrences of the events whose effect is to increment $Y^{[n]}(t)$ by $l$ (or the density process $X^{[n]}(t)$ by $\frac{l}{n}$). By $\tilde N_{l}(t)$ we denote the correspondent compensated processes $\tilde N_{l}(t)=N_{l}(t)-t$.
	
When $n$ gets large the jumps of the stochastic process \eqref{mc} become both more frequent and smaller in magnitude suggesting that the cadlag trajectories of \eqref{mc} could be approximated by continuous functions (the so-called \emph{fluid} limit or fluid approximations). The process indeed converges to the deterministic solution $x(t)$ of the following $d$-dimensional ODE system
\begin{equation}\label{ode}
\dot x=F(x(t))=\sum_{l\in C} l f_l(x(t))
\end{equation}
under rather general assumptions.
Let us remark that the state space of $X^{[n]}$ is countable while the deterministic process \eqref{ode} lives in a subset of $\mathbb{R}^d$. We set the following notations. Let $E_i^{[n]}$ be the smallest interval (either closed or unbounded) that contains all possible values of the $i$-th component of $X^{[n]}$. Moreover, we denote by $E^{[n]}=\times_i E_i^{[n]}$ the hyperrectangle where the vector $X^{[n]}$ takes values and $\displaystyle E=\cup_{n=0}^\infty E^{[n]}$. In the easiest and most common case, the total population size is conserved in time, moreover it is common practice to set the same initial condition $X_n(0)$ for every $n$ so that they determine the same endpoints of the intervals $E_i^{[n]}$ for all the $n$, while the countable lattice which forms the state space of $X^{[n]}$ becomes denser and denser in $E$ as $n\rightarrow\infty$. The hyperrectangle $E$ can be taken as the state space of the deterministic approximation \eqref{ode}.

A precise statement of this convergence result follows.
\begin{theorem}
Suppose that for each compact $K \subset E$,
\[ \sum_l \abs{l} \sup_{x\in K} f_l (x) < \infty \]
and the function $F$ defined in \eqref{ode} is Lipshitz continuous in $K$, suppose that for each $n$  $X^{[n]}(t)$ satisfies \eqref{mc} with initial conditions $X^{[n]}(0)$ such that $\lim_{n\rightarrow \infty}X^{[n]}(0) =x_0$  and $x(t)$ solves \eqref{ode} with initial condition $x(0)=x_0$, then for every $t\geq 0$,
\[ \lim_{n\rightarrow \infty} \sup_{s\leq t}\,\abs{X^{[n]}(s)-x(s)}=0\qquad \text{a.s.}\]
\end{theorem}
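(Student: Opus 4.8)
The plan is to compare $X^{[n]}$ with $x$ directly, using the centered representation on the second line of \eqref{mc}. Since $F(x)=\sum_l l\,f_l(x)$ by \eqref{ode}, subtracting the integral form $x(t)=x_0+\int_0^t F(x(s))\,ds$ of the ODE gives
\begin{equation}
X^{[n]}(t)-x(t)=\bigl(X^{[n]}(0)-x_0\bigr)+\int_0^t\bigl(F(X^{[n]}(s))-F(x(s))\bigr)\,ds+M^{[n]}(t),
\end{equation}
where the fluctuation term collects the compensated Poisson contributions,
\begin{equation}
M^{[n]}(t)=\sum_l\frac{l}{n}\,\tilde N_l\!\left[n\int_0^t f_l\bigl(X^{[n]}(s)\bigr)\,ds\right].
\end{equation}
Because $x$ is continuous on $[0,t]$, its image lies in a compact set; I would fix a compact $K\subset E$ whose interior contains this trajectory and introduce the exit time $\tau_n=\inf\{s\ge 0: X^{[n]}(s)\notin K\}$, so that on $[0,\tau_n)$ both the Lipschitz constant $L$ of $F$ on $K$ and the summability bound $\sum_l\abs{l}\sup_{x\in K}f_l(x)$ are available.

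Next I would take the supremum over $s\le t\wedge\tau_n$, bound the drift difference by $L\int_0^s\sup_{u\le r}\abs{X^{[n]}(u)-x(u)}\,dr$ using the Lipschitz hypothesis, and apply Gronwall's inequality to obtain
\begin{equation}
\sup_{s\le t\wedge\tau_n}\abs{X^{[n]}(s)-x(s)}\le\Bigl(\abs{X^{[n]}(0)-x_0}+\sup_{s\le t}\abs{M^{[n]}(s\wedge\tau_n)}\Bigr)e^{Lt}.
\end{equation}
The first term vanishes by the assumption $X^{[n]}(0)\to x_0$, so everything reduces to showing that the fluctuation $\sup_{s\le t}\abs{M^{[n]}(s)}$ tends to $0$ almost surely. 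Once this is established, the right-hand side above tends to $0$ a.s., which forces $\tau_n>t$ for all large $n$ (the process cannot leave the neighborhood $K$ of $x$), removing the localization and yielding the claimed uniform convergence on all of $[0,t]$.

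For the fluctuation term I would invoke the functional strong law of large numbers for the unit-rate Poisson process: for each fixed $c>0$, $\lim_{n\to\infty}\sup_{v\le c}\tfrac1n\abs{\tilde N_l(nv)}=0$ almost surely. Since the time argument satisfies $\int_0^s f_l(X^{[n]}(r))\,dr\le t\,\sup_{x\in K}f_l(x)=:c_l$ on $[0,\tau_n)$, each summand of $M^{[n]}$ is controlled by $\abs{l}\sup_{v\le c_l}\tfrac1n\abs{\tilde N_l(nv)}$, a quantity that is itself bounded by a constant multiple of $\abs{l}\sup_{x\in K}f_l(x)$. The series of these bounds converges by the summability hypothesis, so by dominated convergence the almost sure limit may be passed inside the sum over $l$, giving $\sup_{s\le t}\abs{M^{[n]}(s\wedge\tau_n)}\to 0$ a.s. The main obstacle is precisely this interchange: the summation over the (possibly infinitely many) jump directions $l$ is entangled with a path-dependent random time change inside each Poisson process, and it is the hypothesis $\sum_l\abs{l}\sup_{x\in K}f_l(x)<\infty$ that supplies the uniform domination needed both to bound the drift and to justify passing the almost sure limit through the infinite sum.
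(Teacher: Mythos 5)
Your argument is correct and follows essentially the same route as the proof the paper relies on (the paper does not reproduce a proof but defers to Ethier--Kurtz, Ch.~11, Thm.~2.1): the centered Poisson representation, localization to a compact neighbourhood of the limiting trajectory with an exit time, Gronwall, and the functional strong law for unit-rate Poisson processes. The one step to tighten is the domination: $\sup_{v\le c_l}\tfrac1n\abs{\tilde N_l(nv)}$ is not deterministically a constant multiple of $\sup_{x\in K}f_l(x)$, but it is bounded by $\tfrac1n N_l(nc_l)+c_l$, whose supremum over $n$ is an a.s.\ finite random variable with expectation of order $c_l$, so the hypothesis $\sum_l\abs{l}\sup_{x\in K}f_l(x)<\infty$ still yields an a.s.\ summable envelope and the interchange of the limit in $n$ with the sum over $l$ goes through.
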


Such a deterministic limit allows to approximate the mean of the density process \eqref{mc}, but looses all other informations encoded in its randomness like variance, skewness, bimodalities, tail behaviors\ldots
 
A sharper continuous approximation can be obtained in terms a diffusion process as stated in the following Theorem. Our formulation here is still very close to the one in \cite{kurtz} except that we want to give special emphasis to some assumptions that are only implicitly made in the book, but that are crucial for the motivations of present article (cf.Remark \ref{remBound}). More comments on the assumptions are included at the beginning of Section \ref{robabella}.

\begin{theorem}\label{diffapprox}
Let $X^{[n]}(t)$  be as in \eqref{mc} and let $x(t)$ solve \eqref{ode} with initial condition $x(0)=x_0$. Let moreover the diffusion process $G^{[n]}(t)$ solve
\begin{equation}
G^{[n]}(t) = G^{[n]}(0)
+\sum_{l} \frac{l}{n}\, \left[n\int_{0}^{t} f_{l}(G^{[n]}(s))ds+ W_{l}\left(n \int_{0}^{t} f_{l}(G^{[n]}(s))ds\right)\right]\label{diff}
\end{equation}
where the $W_l(t)$ are independent standard Wiener processes. For every $l$ the driving processes $N_l(t)$ and $W_l(t)$ of equations \eqref{mc} and \eqref{diff} be constructed on the same probability space as in Lemma \ref{lemma:MBePP}.
Let $U$ be any open connected subset of $E^{[n]}$ that contains $x(t)$ for every $0\leq t\leq T$. Let $\bar f_l=\sup_{x \in U} f_l(x)<\infty$ and suppose $\bar f_l=0$ except for finitely many $l$. Suppose $M>0$ satisfies both the two equations below for any $x,y\in U$
\begin{equation}\begin{aligned}
\abs{f_l(x)-f_l(y)}&\leq M\abs{x-y}\\
\abs{F(x)-F(y)}&\leq M\abs{x-y}.
\end{aligned}\label{Lip}\end{equation}
Let $\tau_n=\inf\{t:X^{[n]}(t)\notin U \textup{ or }G^{[n]}(t)\notin U\}$. Note that $\mathbb{P}(\tau_n>T)\rightarrow 1$ for $n\rightarrow\infty$. Then for $n\geq 2$ there is a random variable $\Gamma^T_n$ and positive constants $\lambda_T$, $C_T$, and $K_T$ depending on $T$, on $M$, and on the $\bar f_l$, but not on $n$, such that
\[\sup_{t\leq T\wedge \tau_n} \Big| X^{[n]}(s)-G^{[n]}(s) \Big| \leq \Gamma^T_n \frac{\log n}{n}\]
and
\[\mathbb{P}\left(\Gamma^T_n>C_T+x\right)\leq K_T\, n^{-2}\exp\left(-\lambda_T\sqrt{x}-\frac{\lambda_T x}{\log n}\right).\]
\end{theorem}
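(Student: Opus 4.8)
The plan is to couple the two equations through Lemma~\ref{lemma:MBePP}, subtract them, and close a Gronwall estimate in which the stochastic forcing is separated into a pure coupling error and a Brownian time-change error. Since the bound is only required up to $T\wedge\tau_n$, every quantity may be evaluated inside $U$, where \eqref{Lip} and $\bar f_l<\infty$ hold and only finitely many $l$ are active. Taking the common initial value $G^{[n]}(0)=X^{[n]}(0)$ (the natural choice for a strong approximation) and abbreviating the random clocks $A_l(t)=n\int_0^t f_l(X^{[n]}(s))\,ds$ and $B_l(t)=n\int_0^t f_l(G^{[n]}(s))\,ds$, both bounded by $v_l:=nT\bar f_l$, the compensated form of \eqref{mc} minus \eqref{diff} reads, using $F=\sum_l l f_l$,
\[
X^{[n]}(t)-G^{[n]}(t)=\int_0^t\!\big(F(X^{[n]}(s))-F(G^{[n]}(s))\big)\,ds+\sum_l\frac ln\Big[\tilde N_l(A_l(t))-W_l(B_l(t))\Big].
\]
The drift integral is controlled by the second line of \eqref{Lip}.

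For the stochastic sum I would insert and subtract $W_l(A_l(t))$, writing
\[
\tilde N_l(A_l(t))-W_l(B_l(t))=\big[\tilde N_l(A_l(t))-W_l(A_l(t))\big]+\big[W_l(A_l(t))-W_l(B_l(t))\big].
\]
Lemma~\ref{lemma:MBePP} realises each pair $(N_l,W_l)$ on one probability space with a Koml\'os--Major--Tusn\'ady rate, so the first bracket is dominated, uniformly on $[0,v_l]$, by the coupling residual $R_l:=\sup_{u\le v_l}\abs{\tilde N_l(u)-W_l(u)}$, of size $O(\log v_l)=O(\log n)$ with the tail supplied by the lemma. For the second bracket, the first line of \eqref{Lip} gives $\abs{A_l(t)-B_l(t)}\le nM\int_0^t\abs{X^{[n]}(s)-G^{[n]}(s)}\,ds$, so I would bound it by the modulus of continuity $\Omega_l(\delta):=\sup_{a,b\le v_l,\,\abs{a-b}\le\delta}\abs{W_l(a)-W_l(b)}$ of $W_l$ on $[0,v_l]$, evaluated at that random increment.

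Writing $\Delta(t)=\sup_{s\le t\wedge\tau_n}\abs{X^{[n]}(s)-G^{[n]}(s)}$ and collecting the finitely many terms yields the implicit inequality
\[
\Delta(t)\le M\int_0^t\Delta(s)\,ds+\frac1n\sum_l\abs l\,R_l+\frac1n\sum_l\abs l\,\Omega_l\!\Big(nM\!\int_0^t\Delta(s)\,ds\Big).
\]
The self-referential modulus term is the crux, since the increment fed into $\Omega_l$ is itself governed by $\Delta$. I would resolve it by bounding $\Omega_l(\delta)\le\Phi_l\sqrt\delta$ through the Gaussian modulus of $W_l$ and substituting, which turns the estimate into one that is essentially quadratic in $\sqrt{\Delta}$; a Gronwall argument (or, equivalently, a bootstrap on the exit time of $\Delta$ from a band of height $\gamma\frac{\log n}{n}$) then closes it and produces $\Delta(T)\le\Gamma^T_n\frac{\log n}{n}$, with $\Gamma^T_n$ an $e^{MT}$-type functional of $\sum_l\abs l\,R_l$ and $\sum_l\abs l\,\Phi_l$, and with the clock increment self-consistently of order $\log n$ so that the rate $\frac{\log n}{n}$ is genuinely attained.

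Finally, the tail of $\Gamma^T_n$ would be assembled by a union bound over the finitely many active $l$: the two-regime Koml\'os--Major--Tusn\'ady tail of $R_l$ furnished by Lemma~\ref{lemma:MBePP} (thresholded at a multiple of $\log n$, which creates the $n^{-2}$ prefactor) contributes the $\exp(-\lambda_T x/\log n)$ factor, while the Gaussian tail of the modulus variables $\Phi_l$, propagated through the quadratic inversion, contributes the $\exp(-\lambda_T\sqrt x)$ factor; the constants depend only on $T$, $M$ and the $\bar f_l$. I expect the principal difficulty to be precisely this step: handling the self-referential modulus so that the feedback stays sublinear, and tracking the two tail sources through the Gronwall/quadratic step so that they combine into the stated form $K_T\,n^{-2}\exp(-\lambda_T\sqrt x-\lambda_T x/\log n)$ with constants independent of $n$.
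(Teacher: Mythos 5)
Your overall architecture is the right one, and it is essentially the scheme of the proof the paper defers to (the proof of Theorem~\ref{diffapprox} is not reproduced here but cited from Ethier--Kurtz, Ch.~11, Sec.~3; the same machinery is carried out in full in the proof of Theorem~\ref{th:main}): couple $\tilde N_l$ and $W_l$ by Lemma~\ref{lemma:MBePP}, split $\tilde N_l(A_l(t))-W_l(B_l(t))$ into a KMT coupling residual plus a Brownian time-change error, control the drift by \eqref{Lip}, and close with Gronwall and the inversion $y\le a+b\sqrt y\Rightarrow y\le 2a+b^2$. The genuine gap is at the step you yourself flag as the crux. The bound $\Omega_l(\delta)\le\Phi_l\sqrt{\delta}$, uniform in $\delta\in(0,v_l]$ with a single a.s.\ finite random variable $\Phi_l$, is false: by L\'evy's modulus of continuity on an interval of length $v_l=nT\bar f_l$ one has $\Omega_l(\delta)\asymp\sqrt{2\delta\log(v_l/\delta)}$, so $\sup_{0<\delta\le v_l}\delta^{-1/2}\Omega_l(\delta)=+\infty$ almost surely. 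Even if you restrict to the relevant range $\delta\gtrsim\log n$, the ratio $\Omega_l(\delta)/\sqrt{\delta}$ is of order $\sqrt{\log(v_l/\delta)}\asymp\sqrt{\log n}$, so any admissible $\Phi_l$ must grow with $n$ and cannot have a Gaussian tail with constants independent of $n$ --- which is exactly what your final union bound needs in order to produce the stated tail for $\Gamma^T_n$.

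The correct replacement is Lemma~\ref{w-w}, which the paper states precisely as ``a second ingredient needed to prove Theorem~\ref{diffapprox}'' and which you never invoke. There the increment is measured in units of $\log n$ (i.e.\ $h\le\delta\log n$) and the modulus is normalized by $(\delta+1)^{-1/2}$, yielding $\Lambda_{l,n}(\delta)\le(1+\delta)^{1/2}\bar\Lambda_{l,n}$ with $\bar\Lambda_{l,n}\le c_{2,l}\log n+L_{2,l,n}$ and the explicit tail $\P(L_{2,l,n}>x)\le\kappa_{2,l}\,n^{-2}\exp(-\lambda_{2,l}x-x^2/(18\log n))$. Feeding in the random clock discrepancy $\abs{A_l(t)-B_l(t)}\le nM\int_0^t\Delta(s)\,ds$ then gives a bound of the form $\bigl(1+\tfrac{nM}{\log n}\int_0^t\Delta(s)\,ds\bigr)^{1/2}(c_{2,l}\log n+L_{2,l,n})$ for the time-change error; this is what makes the Gronwall/quadratic step close self-consistently at the rate $\log n/n$, and the squared term $(c_{2,l}+L_{2,l,n}/\log n)^2$ produced by the inversion is precisely the source of the $\exp(-\lambda_T\sqrt{x})$ factor in the tail. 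The extra $\sqrt{\log n}$ that your $\Phi_l\sqrt{\delta}$ ansatz silently discards is therefore not a technicality but the reason the theorem has the form it does. With Lemma~\ref{w-w} substituted for that step, the rest of your argument goes through as written.
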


Equation \eqref{mc} and equation \eqref{diff} have the same structure, the sole difference being that Brownian motions replace compensated Poisson processes. 

One important ingredient in the proof of Theorem \ref{diffapprox} is indeed that paired trajectories of a Wiener process and of a Poisson process can be constructed on the same probability space in such a way that the uniform distance between them is suitably controlled, and one can be use the former to approximate the latter trajectory by trajectory. Lemma \ref{lemma:MBePP} states this fact formally. It relies on the so called \emph{Hungarian construction} or KTM theorem of \cite{KomlosI}.
A second ingredient which is needed to prove Theorem \ref{diffapprox} is Lemma \ref{w-w} that we report below since it is needed for the proof of Theorem \ref{th:main} too.
Before stating this technical lemma we want to remark that the diffusion process \eqref{diff} has the same law of the solution of the following stochastic differential equation
\begin{equation}
G_\ast^{[n]}(t) = G_\ast^{[n]}(0)
+\sum_{l} \frac{l}{\sqrt{n}}\, \left[\sqrt{n}\int_{0}^{t} f_{l}(G_\ast^{[n]}(s))ds+ \int_{0}^{t} \sqrt{f_{l}(G_\ast^{[n]}(s))} dW_{l}(s)\right]\label{diff2}
\end{equation}
to which it is related to \eqref{diff} by a time-change (cf. Kurtz, Ch. 6, Sec. 5).

\begin{lemma}\label{lemma:MBePP} 
	Given a Brownian Motion $W_{t}$  we can construct a compensated Poisson process $\tilde N_{t}$ on the same probability space such that for any $\beta>0$ there exist positive constants $\lambda, \kappa$ and $c$ and a non-negative random variable $L_{n}$ such that
	\[\sup_{t\leq\beta n} \abs{\tilde N_{t}-W_{t}}\leq c \log n +L_{n}\]
	and
	\[
	\P [L_{n}>x]\leq k\, n^{-2} \text{e}^{-\lambda x}
	\]
\end{lemma}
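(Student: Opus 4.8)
The plan is to derive the statement from the Komlós--Major--Tusnády (KMT) strong approximation theorem of \cite{KomlosI} applied at integer times, and then to promote the resulting discrete coupling to one valid on the whole interval $[0,\beta n]$ by controlling the oscillations of both processes inside the unit intervals. First I would observe that the increments $\xi_k:=N_k-N_{k-1}$ of the underlying Poisson process are i.i.d.\ with a Poisson distribution of parameter $1$, so that the centred variables $\xi_k-1$ have mean zero, unit variance, and a moment generating function finite on the whole real line. Their partial sums are precisely the values of the compensated process at integer times, $\tilde N_k=\sum_{j\leq k}(\xi_j-1)$.

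The KMT theorem then provides, on a common probability space, a standard Brownian motion $W$ together with absolute constants $C,K,\lambda_0>0$ such that
\[\P\left(\max_{k\leq m}\abs{\tilde N_k-W_k}>C\log m+y\right)\leq K\,\mathrm{e}^{-\lambda_0 y},\qquad y>0,\]
for every integer $m$. I would apply this with $m=\lceil\beta n\rceil$, noting that $\log m\leq 2\log n$ for $n$ large enough.

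To pass from integer times to all of $[0,\beta n]$ I would use, for $t\in[k,k+1)$, the triangle inequality
\[\abs{\tilde N_t-W_t}\leq\abs{\tilde N_k-W_k}+\abs{\tilde N_t-\tilde N_k}+\abs{W_t-W_k}\]
and control the two oscillation terms uniformly in $k$. Monotonicity of $N$ gives $\abs{\tilde N_t-\tilde N_k}\leq\xi_{k+1}+1$, whose maximum over $k<m$ has a sub-exponential (Poisson) tail, while $\max_{t\in[k,k+1]}\abs{W_t-W_k}$ has a Gaussian tail by the reflection principle. Writing $R_n:=\max_{k<m}\big(\xi_{k+1}+1+\max_{t\in[k,k+1]}\abs{W_t-W_k}\big)$ and taking a union bound over the $m\leq 2\beta n$ intervals, the at-least-exponential decay of both tails shows that for a large enough constant $C'$ one has $\P(R_n>C'\log n+y)\leq K'\,n^{-2}\mathrm{e}^{-\lambda' y}$.

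Finally I would combine the two estimates. Shifting the KMT threshold by $2\lambda_0^{-1}\log n$ turns its prefactor into $n^{-2}$, since $K\,\mathrm{e}^{-\lambda_0(2\lambda_0^{-1}\log n+y)}=K\,n^{-2}\mathrm{e}^{-\lambda_0 y}$. Setting $c:=2C+C'+2\lambda_0^{-1}$ and using $\sup_{t\leq\beta n}\abs{\tilde N_t-W_t}\leq\max_{k\leq m}\abs{\tilde N_k-W_k}+R_n$, I would define the non-negative random variable $L_n:=\big(\sup_{t\leq\beta n}\abs{\tilde N_t-W_t}-c\log n\big)_+$ and split the event $\{L_n>x\}$ across the two contributions, obtaining $\P(L_n>x)\leq\kappa\,n^{-2}\mathrm{e}^{-\lambda x}$ with $\kappa=K+K'$ and $\lambda=\tfrac12\min(\lambda_0,\lambda')$. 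The deep ingredient here is the KMT construction, which I take as given; the only genuine work is the continuous-time interpolation and the bookkeeping that yields the decisive $n^{-2}$ prefactor. I expect the latter to be the main delicate point, since that factor, absent from the raw KMT bound, is precisely what later permits the Borel--Cantelli summation over $n$ in the proof of Theorem \ref{diffapprox}, and it is purchased only by sacrificing an additional $O(\log n)$ into the deterministic term $c\log n$.
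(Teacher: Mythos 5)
Your argument is correct and follows the same route the paper relies on: the paper gives no proof of its own but defers to \cite{kurtz} (Ch.~11, Sec.~3), whose argument is exactly what you reconstruct --- the KMT coupling of \cite{KomlosI} for the centred Poisson partial sums, a unit-interval oscillation bound to pass to continuous time, and the sacrifice of an extra $O(\log n)$ in the deterministic term to purchase the $n^{-2}$ prefactor. The only cosmetic difference is the direction of the construction (the lemma starts from $W$ and builds $\tilde N$, you do the reverse), which is resolved by a standard transfer of the joint KMT law onto a space carrying the given Brownian motion.
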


\begin{lemma} \label{w-w}
	Let $\bar f_l$ be defined as in Theorem \ref{diffapprox}. We define
	\[\Lambda_{l,n}(\delta)=\sup_{\{u,h\geq 0 \; : \;u+h\leq n\bar f_{l}T, \;h\leq\delta \log n\}} \abs{W_{l}(u+h)-W_{l}(u)}\]
	
	and
	\[\bar \Lambda_{l,n}=\sup_{0\leq\delta\leq n \bar f_{l} T} (\delta+1)^{-\frac{1}{2}}\Lambda_{l,n}(\delta).\]
	
	There exist a non-negative random variable $L_{2,l,n}$ and positive constants $c_{2,l}$, $\kappa_{2,l}$, and $\lambda_{2,l}$ such that
	\[\bar \Lambda_{l,n}\leq c_{2,l} \log n +L_{2,l,n}\]
	and 
	\begin{align}
		\mathbb{P} \left( L_{2,l,n}  > x \right) \leq \kappa_{2,l}\, n^{-2} \exp \left( -\lambda_{2,l}\, x -\frac{x^2}{18 \log n} \right)
	\end{align}
\end{lemma}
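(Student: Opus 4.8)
The quantity $\bar\Lambda_{l,n}$ is a normalised maximal oscillation of the Brownian motion $W_l$ on the horizon $T':=n\bar f_l T$, so the statement is the $W$--$W$ analogue of the Komlós--Major--Tusnády tail already recorded in Lemma~\ref{lemma:MBePP}, and the proof is a quantitative form of Lévy's modulus of continuity. The plan is to prove a pointwise (in $\delta$) oscillation bound with an explicit Gaussian tail, and then to pass to the supremum over $\delta$ by a geometric discretisation that exploits the monotonicity of $\delta\mapsto\Lambda_{l,n}(\delta)$.

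First I would establish the pointwise bound. Covering $[0,T']$ by (slightly overlapping) blocks of length of order $\delta\log n$, any increment $W_l(u+h)-W_l(u)$ with $h\le\delta\log n$ lies in a single block, so its modulus is at most twice the fluctuation of $W_l$ on that block. The reflection principle together with the Gaussian tail estimate then yields, for a universal constant $c_0$ (one may take $c_0=16$),
\[
\mathbb{P}\bigl(\Lambda_{l,n}(\delta)>y\bigr)\le C\,\frac{T'}{\delta\log n}\,\exp\Bigl(-\frac{y^2}{c_0\,\delta\log n}\Bigr),
\]
valid for every fixed $\delta>0$ and $y>0$.

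Next I would insert the scaling dictated by the normalisation $(\delta+1)^{-1/2}$, taking $y=(\delta+1)^{1/2}(c\log n+x)$ with $c$ a constant to be fixed. The identity $(\delta+1)/\delta=1+1/\delta$ splits the exponent,
\[
\frac{y^2}{c_0\,\delta\log n}=\frac{(c\log n+x)^2}{c_0\log n}+\frac{(c\log n+x)^2}{c_0\,\delta\log n}.
\]
The first summand equals $\tfrac{c^2}{c_0}\log n+\tfrac{2c}{c_0}x+\tfrac{x^2}{c_0\log n}$ and produces exactly a factor $n^{-c^2/c_0}\exp\bigl(-\tfrac{2c}{c_0}x-\tfrac{x^2}{c_0\log n}\bigr)$; the second summand, multiplied by the prefactor $T'/(\delta\log n)$, is a function of the form $\delta\mapsto A\delta^{-1}e^{-B/\delta}$ whose maximum in $\delta$ equals $Ae^{-1}/B$ and whose sum over a geometric grid $\delta_j=\rho^{\,j}$ of ratio $\rho$ (only $O(\log n)$ values occur, since $\delta\le T'$) is of order $A/B=O\bigl(T'/\log^2 n\bigr)=O(n)$. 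Returning from the grid to the full supremum costs a factor $\sqrt\rho$, because $(\delta_j+1)^{-1/2}\le\sqrt\rho\,(\delta_{j+1}+1)^{-1/2}$ and $\Lambda_{l,n}(\cdot)$ is non-decreasing, so that the $\sqrt\rho$ rescaling of the threshold replaces $c_0$ by $\tilde c_0:=\rho c_0$; the range $\delta\in[0,1]$ is treated separately by the same bound with $\delta=1$. This collects into
\[
\mathbb{P}\bigl(\bar\Lambda_{l,n}>c\log n+x\bigr)\le \kappa\,n^{1-c^2/\tilde c_0}\exp\Bigl(-\tfrac{2c}{\tilde c_0}x-\frac{x^2}{\tilde c_0\log n}\Bigr).
\]

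The decisive point, and the main obstacle, is the factor $T'=n\bar f_l T$ sitting in the prefactor: it grows linearly in $n$, and the whole estimate is useful only because this growth is annihilated by the centring factor $n^{-c^2/\tilde c_0}$. Choosing $c$ large enough that $c^2/\tilde c_0>3$ forces $n^{1-c^2/\tilde c_0}\le n^{-2}$ for $n\ge2$ even after multiplying by the $O(\log n)$ grid terms. Finally, taking $c_0=16$ and grid ratio $\rho=9/8$ gives the effective constant $\tilde c_0=18$, which is exactly the value appearing in the statement; setting $c_{2,l}:=c$, $\lambda_{2,l}:=2c/\tilde c_0$, absorbing the $\sqrt\rho$ and the grid and base-block contributions into $\kappa_{2,l}$, and defining $L_{2,l,n}:=(\bar\Lambda_{l,n}-c_{2,l}\log n)_+$ delivers the claimed inequality.
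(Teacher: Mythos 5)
Your argument is correct and follows essentially the same route as the source the paper relies on: the paper itself gives no proof of Lemma \ref{w-w}, deferring to Ethier and Kurtz, Ch.~11, Sec.~3, where exactly this blocking/reflection-principle estimate, the $(\delta+1)^{1/2}$ rescaling of the threshold with the split $(\delta+1)/\delta=1+1/\delta$, and a geometric discretisation in $\delta$ produce the tail with the constant $18$ arising as $\rho\cdot 16$. The only bookkeeping point worth tidying is that the prefactor in your pointwise bound should read $\max\bigl\{1,\,C\,T'/(\delta\log n)\bigr\}$, since at least one block is always needed (this matters only in the harmless regime $\delta\log n>T'$, where the Gaussian factor is far stronger than required); with that, your choices $c^2/\tilde c_0>3$, $\rho=9/8$, $\lambda_{2,l}=2c/\tilde c_0$ and $L_{2,l,n}=(\bar\Lambda_{l,n}-c_{2,l}\log n)_+$ deliver the stated inequality.
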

	
The proofs of both theorems can be found in \cite{kurtz}[Ch. 11, Sec. 3], but be aware that the proof of Lemma \ref{w-w} is embedded in the proof of Theorem \ref{diffapprox}.

\section{Strong approximation in bounded domains} \label{robabella}
It is well documented by a large set of examples that if the state space is bounded and the boundaries are visited with non-negligible probability, the deterministic approximations of Section \ref{sec:background} give a largely inaccurate approximation of the original density dependent Markov chain even for moderately large values of the indexing parameter \cite{srivastava, ErdiLente, ATPN14, angius2015approximate}.
It is quite natural to think that the problem can be solved by adopting the sharper diffusion approximation of Theorem \ref{diffapprox}, but such an approximation would only hold up to the first hitting time of the boundary, then it becomes ill-posed.
In the following Subsection we are going to investigate in some more detail what exactly happens when the state space is bounded and in which sense the assumptions of Theorem \ref{diffapprox} prevent us to be satisfied with the diffusion approximation.

\subsection{On the inapplicability of the diffusion approximation}\label{inapplicability}
We believe it is worth to explain in some details what prevents the application of Theorem \ref{diffapprox} in practical cases from the mathematical point of view.
Let us focus on the one-dimensional case $d=1$ for simplicity, when the Markov chain $X^{[N]}$ is scalar. In almost all the examples of density dependent families \eqref{rates} the functions $f_l$ appearing in the rates \eqref{rates} are polynomials of degree 0, 1 or 2.

A first (minor) issue is that the Lipshitz conditions \eqref{Lip} fail at infinity if the polynomials have order higher then 1. This fact does not have relevant practical consequences since in Theorem \ref{diffapprox} the Lipschitz conditions \eqref{Lip} is only required to hold on an open interval $U\subset E^{[n]}$, and the upper extremal can be taken as large as we want so that even for moderate $n$, the probability that the process exit $U$ in a finite time $T$ is negligible.

 A second and much more important issue is that when the rate functions $f_l(x)$ are polynomials, often they are positive only for $x$ belong to a closed or half open interval.
 In this case the Markov chain $X^{[n]}$ is naturally confined in the domain where the rates are positive and the boundary of this domain can be visited with positive probability. The corresponding diffusion approximation, however can be more problematic: diffusion processes on closed intervals require a careful treatment of the behavior at the boundary. The next example, being very simple is a useful prototype of what can happen.
\begin{example}\label{esempioprimo}
Let $X^{[n]}(t)$ be a birth and death process with $X^{[n]}(0)=\frac{n}{2}$, linear birth rate $q^{[n]}_{k,k+1}=2 n\left(1-\frac{k}{n}\right)$ and linear death rate $q^{[n]}_{k,k-1}= k$. It is confined in $[0,n]$ and the diffusion approximation \eqref{diff2} of the corresponding density process $\frac{X^{[n]}}{n}(t)$ solves
\begin{equation}dG^{[n]}(t)= \left(2-3G^{[n]}(t) \right) dt+ \frac{1}{\sqrt{n}} \left[ \sqrt{2\left(1-G^{[n]}(t)\right)} dW_1(t) - \sqrt{G^{[n]}}dW_2(t)\right]\label{esempiofacile}\end{equation}
for some independent Brownian motions $W_1$ and $W_2$.
The following process, however
\begin{equation}d\tilde G^{[n]}(t)= \left(2-3\tilde G^{[n]}(t) \right) dt+ \sqrt{\frac{1}{n}\left(2-\tilde G^{[n]}(t)\right)} dW(t)\label{secver}\end{equation}
has the same generator, and hence, up to the first exit from the interval $(0,1)$, it also has the same trajectories as $G^{[n]}(t)$ (cf.\cite{allenEquivalent}).
Now this second process is naturally defined on $(-\infty,2)$ and, for finite $n$, it does not have any reason not to leave $(0,1)$, nor $[0,1]$. However right after a trajectory of \eqref{secver} exits $[0,1]$, the square roots in the coefficients of \eqref{esempiofacile} become negative and it cannot be a trajectory of \eqref{esempiofacile} any longer.
\end{example}

Of course, the theorem is not wrong and up to the first visit of the boundary the diffusion approximation holds. Moreover as $n\rightarrow\infty$ (but a sufficient $n$ might be really large) the noise, which scales with $\frac{1}{\sqrt{n}}$, becomes too weak to drive the process to the boundary in a finite time, and the diffusion approximation becomes feasible. In many realistic settings however, boundaries are often visited with non-negligible probability also in finite time horizons and in this case the diffusion approximation is of no help.

The interest in Example \ref{esempioprimo} comes from the fact that it is so simple that can be tackled on a purely analytical ground.
Indeed, much more complex and interesting examples can be exhibited (and actually have been exhibited in the references cited above), and examples where the problem is more dramatically relevant (see e.g. Section \ref{nasty}) are known. However, in most cases only simulations are feasible and it becomes less obvious that a theoretical problem is really there that it is not just due some numerical artifact.

\subsection{The new jump-diffusion approximation}

In his famous paper \cite{feller1d}, Feller introduced the so called elementary return process. It is a Markov process on a closed interval which behaves as a diffusion up to the first time it hits one of the endpoints. At the boundary it stays trapped for an exponential time and then it jumps back to the interior of the interval with a given jump distribution. Back in the interior, it starts diffusing anew with the same law.
This kind of process provides us with the right tool to develop a strong approximation for  one-dimensional density dependent families of Markov chains on a closed interval $[l,u]$. Indeed we can approximate the density process of the original Markov chain with its diffusion approximation as long as the latter remains in the interior. When the diffusion process reaches one of the boundaries we mimic the behavior that the discrete density process would have had there: we make the process wait an exponential time at $l$ (or $u$) with the same parameter the density process would have, and then a jump to the point $l+\frac{1}{n}$ (or to $u-\frac{1}{n}$).

In this Section we aim at discussing how accurate such an approximation is and at showing that a suitably defined jump-diffusion process that generalizes this behavior to multidimensional bounded state spaces can be defined. We prove a strong approximation theorem for density dependent families of Markov chains defined on bounded (multidimensional) state spaces. The approximation rate we achieve is the same as those achieved by diffusion approximation in Theorem \ref{diffapprox}, but it still holds when the boundaries are reached.
To avoid an unnecessarily heavy notation, we prove the approximation under the simplified hypothesis that the Lipschitz conditions \eqref{Lip2} hold in the whole state space $E^{[n]}$. As already mentioned at the beginning of Section \ref{inapplicability} such an assumption is often not satisfied (e.g. when the rate functions are polynomials of degree 2, cf. Section \ref{nasty}). However in this case a slightly modified version of the theorem still holds (cf. Remark \ref{relax}) and the approximation can be safely used for every finite time horizon.

\begin{theorem}\label{th:main}
	Let $X^{[n]}(t)$ be as in \eqref{mc} and such that 
	\begin{equation}\bar f_l=\sup_{x \in E} f_l(x)<\infty\text{ and }\bar f_l=0\label{boundedf}\end{equation}
	except for finitely many $l$. Suppose $M>0$ satisfies both the two equations below for any $x,y\in E$
	\begin{equation}\begin{aligned}
	\abs{f_l(x)-f_l(y)}&\leq M\abs{x-y}\\
	\abs{F(x)-F(y)}&\leq M\abs{x-y}.
	\end{aligned}\label{Lip2}\end{equation}
	
Define a jump diffusion process
\begin{align}\label{eq:main:Z}
Z^{[n]}(t) =& X^{[n]}(0)+ \sum_{l}l \left\{\int_{0}^{t} \left[1- \mathrm{ab} \left(l,Z^{[n]}(t)\right)\right] f_{l}\left(Z^{[n]}(s)\right)ds\right.\\
&+\frac{1}{n}\, W_{l}\left(n \int_{0}^{t} \left[1-\mathrm{ab} \left(l,Z^{[n]}(t)\right)\right] f_{l}\left(Z^{[n]}(s)\right)ds\right) +\\
&+\left.\frac{1}{n}  N_{l}\left(n \int_{0}^{t}  \mathrm{ab} \left(l,Z^{[n]}(t)\right)\, f_{l}\left(Z^{[n]}(s)\right)ds\right)\right\},
\end{align}
	where the $W_l(t)$ are independent standard Wiener processes and the function $\mathrm{ab}(l,x)$ is defined as follows
	\[\mathrm{ab}(l,x)=\mathbb{1}\big\{ \exists i : x_{i} \in \partial E^{[n]}_{i} \text{\rm and } (x_{i}+n^{-1}l_{i}) \notin \partial E_{i} \big\}.\]
	The function $\text{ab}(l,x)$ takes value $1$ or $0$ depending if the increment $l$ moves at least a component away from the boundary (ab) or not.

	Fix  $T>0$. Given a weak solution $\left(\{N_l(t)\}_l,\{W_l(t)\}_l,Z^{[n]}(t)\right)$ of equation \eqref{eq:main:Z}  we can construct on the same probability space a  stochastic process $\hat X^{[n]}(t)$ such that
	\begin{enumerate}
		\item $\hat X^{[n]}$ has the same law of $X^{[n]}$ (cf. equation \eqref{mc})
		\item  for $n\geq 2$  there exist a positive constant $c_{T}$ and a random variable $L_{n,T}= o_{P}(1)$ such that 
 \begin{align}\label{eq:main:bound}
 \frac{n}{\log n}\sup_{t\leq T}\, \abs{\hat X^{[n]}(t)-  Z^{[n]}(t) }\leq c_{T}+L_{n,T}
 \end{align}
and in particular there exist positive constants $\lambda_{T}$ and $k_{T}$ such that
\[\P \left(L_{n,T}>x \right)\leq k_{T} n^{-2} \exp\left(- \lambda_{T}x^{\frac{1}{2}}-\frac{\lambda_{T}x}{\log n}\right)\]
\end{enumerate}
\end{theorem}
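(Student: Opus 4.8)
The plan is to construct $\hat X^{[n]}$ on the same probability space as $Z^{[n]}$ by a coupling argument, mimicking the structure of Kurtz's proof of Theorem \ref{diffapprox} but accounting for the fact that some of the Poisson-driven and some of the Wiener-driven increments are now being swapped according to the indicator $\mathrm{ab}(l,\cdot)$. The key observation is that equation \eqref{eq:main:Z} and the representation \eqref{mc} differ, for each $l$, only in whether a given infinitesimal piece of the integrated rate $n\int_0^t f_l(Z^{[n]}(s))\,ds$ is driven by a compensated Poisson increment or by a Wiener increment, and the switch is governed by whether the increment $l$ would move a coordinate off the boundary. Concretely, I would first split, for each $l$, the time-changed clock into the ``active-at-boundary'' portion (where $\mathrm{ab}(l,Z^{[n]}(s))=1$, driven by $N_l$ in $Z^{[n]}$) and the ``interior'' portion (where $\mathrm{ab}(l,Z^{[n]}(s))=0$, driven by $W_l$ in $Z^{[n]}$). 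On the interior portion I want to replace $W_l$ by a coupled compensated Poisson process $\tilde N_l$; on the boundary portion I want to keep the genuine Poisson jumps. This produces a process that is a bona fide density process and therefore has the law of $X^{[n]}$ by \eqref{mc}, giving claim (1).

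For claim (2), I would invoke Lemma \ref{lemma:MBePP} to build, on the common probability space, for each of the finitely many relevant $l$, a compensated Poisson process $\tilde N_l$ coupled to $W_l$ so that $\sup_{t\le\beta n}\abs{\tilde N_l(t)-W_l(t)}\le c\log n+L_n$ with the stated tail. The natural candidate for $\hat X^{[n]}$ is then the process obtained by feeding these coupled $\tilde N_l$ into the interior portions and reusing the same $N_l$ for the boundary portions. The difference $\hat X^{[n]}(t)-Z^{[n]}(t)$ then decomposes, after adding and subtracting the two integrated drifts, into (i) the accumulated increments-gaps $\sum_l \frac{l}{n}[\tilde N_l(\cdot)-W_l(\cdot)]$ evaluated at the (random, state-dependent) interior clocks, plus (ii) a drift-mismatch term controlled by the Lipschitz constant $M$ from \eqref{Lip2} times $\int_0^t\abs{\hat X^{[n]}(s)-Z^{[n]}(s)}\,ds$. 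Term (i) is bounded using Lemma \ref{lemma:MBePP}, and—because the two clocks $n\int_0^t[1-\mathrm{ab}]f_l(\hat X)\,ds$ and $n\int_0^t[1-\mathrm{ab}]f_l(Z)\,ds$ may differ—by Lemma \ref{w-w}, which controls the modulus of continuity of $W_l$ over windows of width $O(\delta\log n)$; here the increment $\bar\Lambda_{l,n}$ absorbs the discrepancy between the two stochastic clocks. Combining (i) and (ii) and applying Gr\"onwall's inequality yields the factor $e^{MT}$ and the overall $\frac{\log n}{n}$ rate, with $L_{n,T}$ assembled from the $L_n$ and $L_{2,l,n}$ random variables, whose tails combine into the stated bound $\P(L_{n,T}>x)\le k_T n^{-2}\exp(-\lambda_T x^{1/2}-\lambda_T x/\log n)$.

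The main obstacle, and the place where this proof genuinely departs from Kurtz's, is controlling the coupling error precisely at the switching events—the times when $Z^{[n]}$ (or $\hat X^{[n]}$) transitions between the interior regime and the boundary regime. On the interior the state is driven by the coupled $W_l/\tilde N_l$ pair, but at the boundary the dynamics uses the true $N_l$ jumps, so I must verify that the two processes cross between regimes in a synchronized way and that the state-dependent clocks $n\int_0^t[1-\mathrm{ab}(l,Z^{[n]}(s))]f_l(Z^{[n]}(s))\,ds$ for $Z^{[n]}$ and for $\hat X^{[n]}$ never drift apart by more than the window width $\delta\log n$ on which Lemma \ref{w-w} gives control. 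The delicate point is that $\mathrm{ab}(l,\cdot)$ is a discontinuous function of the state, so a small discrepancy between $\hat X^{[n]}$ and $Z^{[n]}$ near the boundary could in principle cause the two processes to be in different regimes simultaneously; I would argue that, on the event where the uniform distance is already $O(\log n/n)$, such desynchronization can only occur within $O(n^{-1})$ of $\partial E$ and over clock-windows of length $O(\log n)$, so its contribution is absorbed into $\bar\Lambda_{l,n}$ and does not degrade the rate. Making this localization rigorous—essentially a bootstrap in which the Gr\"onwall bound feeds back into the regime-synchronization estimate—is the technical heart of the argument.
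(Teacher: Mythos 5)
Your overall architecture (couple a compensated Poisson process $\tilde N_l$ to each $W_l$ via Lemma \ref{lemma:MBePP}, decompose $\hat X^{[n]}-Z^{[n]}$ into a coupling-error term, a clock-mismatch term controlled by Lemma \ref{w-w}, and a Lipschitz drift term closed by Gr\"onwall) is the same as the paper's. But there are two genuine gaps. The first, and more important, is that what you identify as ``the technical heart of the argument''---synchronizing the regime switches of the two processes---is not resolved by a bootstrap in the paper; it is designed away. In the paper's construction of $\hat X^{[n]}$ the indicator is $\mathrm{ab}\bigl(l,Z^{[n]}(s)\bigr)$, evaluated along $Z^{[n]}$ in \emph{both} equations, while only the rate arguments $f_l(\cdot)$ are evaluated at the respective processes. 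Hence the two processes change regime at exactly the same times by construction and no desynchronization can occur; and $\hat X^{[n]}$ still has the law of $X^{[n]}$ because for each $l$ the superposition of the two independent time-changed Poisson streams, driven by complementary adapted indicators but with total rate $n f_l(\hat X^{[n]}(s))$, is again a unit-rate Poisson process run at the full clock. Your alternative---letting each process use its own indicator and then arguing that mismatches occur only within $O(n^{-1})$ of $\partial E$ over clock-windows of length $O(\log n)$---is not something you could easily make rigorous: the processes in the motivating examples spend macroscopic occupation time at or near the boundary, so there is no a priori smallness of the set where the indicators could disagree, and the indicator is discontinuous so a Lipschitz/Gr\"onwall feedback does not close.

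The second gap is that you account only for the Wiener-driven (interior) portions. On the boundary portions the same Poisson process $N_l$ is evaluated at the two different clocks $n\int_0^t \mathrm{ab}\, f_l(\hat X^{[n]}(s))\,ds$ and $n\int_0^t \mathrm{ab}\, f_l(Z^{[n]}(s))\,ds$, and this difference does \emph{not} cancel and is not covered by Lemma \ref{lemma:MBePP} or Lemma \ref{w-w}: it requires a modulus-of-continuity estimate for the Poisson process over windows of width $O(\log n)$, uniformly over window positions in $[0,n\bar f_l T]$. This is the term $D_{l,n}$ in the paper and is handled by a separate new lemma (Lemma \ref{NmenoN}), whose tail bound $\kappa_{3,l}\,n^{-2}e^{-x/2}$ enters the final constant $c_T$ and the random variable $L_{n,T}$ alongside the contributions you list. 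Your assembly of $L_{n,T}$ from the $L_n$ of Lemma \ref{lemma:MBePP} and the $L_{2,l,n}$ of Lemma \ref{w-w} alone is therefore incomplete. (A minor further point: the Gr\"onwall step is nonstandard because the Lemma \ref{w-w} term enters as $\bigl(1+\tfrac{nM}{\log n}\int_0^t\Delta_n\bigr)^{1/2}\bar\Lambda_{l,n}$; one needs the elementary implication $y\leq a+b\sqrt{y}\Rightarrow y\leq 2a+b^2$ to close the inequality, as in Kurtz's original argument.)
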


\begin{proof}
Let us define	$\hat X^{[n]}$ so as to solve
\[\begin{aligned}\label{eq:main:hatX}
\hat X^{[n]}(t) =& X^{[n]}(0)+\sum_l \frac{l}{n}\,\left\{  N^W_{l}\left(n \int_{0}^{t} \left[1-\text{ab}\left(l,Z^{[n]}(t)\right)\right] f_{l}\left(\hat X^{[n]}(s)\right)ds\right) +\right.\\
&+\left.  N_{l}\left(n \int_{0}^{t}  \text{ab}\left(l,Z^{[n]}(t)\right)\, f_{l}\left(\hat X^{[n]}(s)\right)ds\right)\right\}=\\
=& X^{[n]}(0)
+\sum_{l} l \left\{\int_{0}^{t} \left[1-\text{ab} \left(l,Z^{[n]}(s)\right)\right]  f_{l}\left(\hat X^{[n]}(s)\right)\, ds+\right.\\
&\left.+\frac{1}{n}\, \tilde N^W_{l}\left(n \int_{0}^{t} \left[1-\text{ab}\left(l,Z^{[n]}(t)\right)\right] f_{l}\left(\hat X^{[n]}(s)\right)ds\right) +\right.\\
&+\left.\frac{1}{n}  N_{l}\left(n \int_{0}^{t}  \text{ab}\left(l,Z^{[n]}(t)\right)\, f_{l}\left(\hat X^{[n]}(s)\right)ds\right)\right\},
\end{aligned}\]
where each of the $\tilde N^W_{l}$ is the compensated Poisson process constructed from the Wiener process $W_l$ by Lemma \ref{lemma:MBePP}.
Let us notice that the Poisson processes $N_{l}$ are not compensated since it will be easier to handle them in this form.

The process $\hat X^{[n]}$ has the same law as $X^{[n]}$.	Let us define $\Delta_{n}(t)=\abs{\hat X^{[n]}(t)-Z^{[n]}(t) }$ and $\bar\Delta_{n,T}=\sup_{t\leq T}\abs{\hat X^{[n]}(t)-Z^{[n]}(t) }$. By \eqref{eq:main:hatX} and \eqref{eq:main:Z} we have

\begin{align}
\Delta_{n}(t)&=\left| \sum_{l}\frac{l}{n}\left\{\left[\tilde N^W_{l}\left(n \int_{0}^{t} \left[1-\text{ab}\left(l,Z^{[n]}(s)\right)\right]  f_{l}\left(\hat X^{[n]}(s)\right)\,ds\right)-\right.\right.\right.\\
&\phantom{=} \left.- \,W_{l}\left(n \int_{0}^{t} \left[1-\text{ab}\left(l,Z^{[n]}(s)\right)\right]  f_{l}\left(Z^{[n]}(s)\right)\,ds\right)\right] + \notag \\
&\phantom{=}+ \left[N_{l}\left(n \int_{0}^{t} \text{ab}\left(l,Z^{[n]}(s)\right)\, f_{l}\left(\hat X^{[n]}(s)\right)\,ds\right)-\right. \\
&\phantom{=} \left. -\, N_{l}\left(n \int_{0}^{t}\text{ab}\left(l,Z^{[n]}(s)\right)\,  f_{l}\left(Z^{[n]}(s)\right)\,ds\right) \right]+ \notag \\
&\phantom{=} + \left. \left. n\int_{0}^{t} \left[1-\text{ab}\left(l,Z^{[n]}(s)\right)\right] \left[ f_l \left(\hat X^{[n]}(s)\right)- f_l \left(Z^{[n]}(s)\right) \right] \, ds \right\} \right| \leq \notag \\
\end{align}
and hence
\begin{align}
\Delta_{n}(t)\leq&C_n(t)+\sum_{l}\frac{\abs{l}}{n}\left[A_{l,n}(t) +B_{l,n}(t)+ D_{l,n}(t)\label{DeltaLeq}\right] 
\end{align}
where
\[\begin{aligned}{}&A_{l,n}(t)=&&\left| \tilde N_{l}^W\left(n \int_{0}^{t} \left[1-\text{ab}\left(l,Z^{[n]}(s)\right)\right] f_{l}\left( \hat X^{[n]}(s)\right)\,ds \right) -\right. \\
&&&\left.\qquad-W_{l}\left(n \int_{0}^{t}  \left[1-\text{ab}\left(l,Z^{[n]}(s)\right)\right] f_{l}\left(\hat X^{[n]}(s)\right)ds\right)\right|\\
&B_{l,n}(t)=&&\left| W_{l}\left(n \int_{0}^{t} \left[1-\text{ab}\left(l,Z^{[n]}(s)\right)\right]  f_{l}\left(\hat X^{[n]}(s)\right)\,ds\right) -\right.\\
&&&\left.\qquad-W_{l}\left(n \int_{0}^{t} \left[1-\text{ab}\left(l,Z^{[n]}(s)\right)\right]  f_{l}\left(Z^{[n]}(s)\right)ds\right)\right| \\
&C_{n}(t)=&& \int_{0}^{t} \abs{F(\hat X^{[n]}(s))-F(Z^{[n]}(s))}\, ds \\
&D_{l,n}(t)=&&\left| N_{l}\left(n \int_{0}^{t} \text{ab}\left(l,Z^{[n]}(t)\right) f_{l}\left(\hat X^{[n]}(s)\right)\,ds\right)-\right.\\
&&&\left. \qquad -N_{l}\left(n \int_{0}^{t} \text{ab}\left(l,Z^{[n]}(t)\right)f_{l}\left(Z^{[n]}(s)\right)ds\right)\right| \end{aligned}\]

Let us discuss how to estimate each term in the previous equations. We have
\[A_{l,n}(t)\leq \sup_{0\leq t\leq n\bar f T}\abs{\tilde N_{l}(t)-W_{l}(t)}\]
and therefore by Lemma \ref{lemma:MBePP}, there exist positive constants $c_{1,l}$, $\kappa_{1,l}$ and $\lambda_{1,l}$ and a non negative random variable $L_{1,l,n}$ such that
\[\begin{aligned}A_{l,n}(t)\leq c_{1,l} \log n+L_{1,l,n},
\end{aligned}\]
and

\begin{equation}P(L_{1,l,n}>x)\leq \kappa_{1,l}\, n^{-2}\text{e}^{-\lambda_{1,l}\,  x}\label{primaqua}\end{equation}

Let us focus on the summand $B_{l,n}(t)$. Let us define\footnote{To get an intuitive idea of why in the following definition the range of the variables over which the $\sup$ is taken is the one we propose, the reader can either go through the whole proof and accept ex-post that they lead to the right result, or appreciate the following heuristics. Thanks to the hypothesis \eqref{boundedf} the two time instants at which Brownian motion is evaluated are both bounded between 0 and $n\bar f_{l}T$. By the Lipschitz property \eqref{Lip2} their difference is bounded by $M T \cdot n\bar \Delta_{n,T}$. If the the theorem were true, we would have $\bar\Delta_{n,T}=O_P\left( \frac{\log n}{n}\right)$ so that for large $n$ such a difference, that denote by $h$ would be of order $\log n$.}
\[\Lambda_{l,n}(\delta)=\sup_{\{u,h\geq 0 \text{ s.t. } u+h\leq n\bar f_{l}T\; \& \:h\leq\delta \log n\}} \abs{W_{l}(u+h)-W_{l}(u)}\]

and
\[\bar \Lambda_{l,n}=\sup_{0\leq\delta\leq n \bar f_{l} T} (\delta+1)^{-\frac{1}{2}}\Lambda_{l,n}(\delta).\]

We have
\begin{align*}
{B_{l,n}(t)}&\leq \Lambda_{l,n}\left(\frac{n}{\log n}\left|\int_{0}^{t} \left[1-\text{ab}\left(l,Z^{[n]}(s)\right)\right] f_{l}\left(\hat X^{[n]}(s)\right)\,ds-\right.\right.\\
&\qquad \qquad\left.\left.-\int_{0}^{t} \left[1-\text{ab}\left(l,Z^{[n]}(s)\right)\right]f_{l}\left(Z^{[n]}(s)\right)ds\right|\right)\leq\\
&\leq \Lambda_{l,n}\left(\frac{n}{\log n}\int_{0}^{t} \left| f_{l}\left(\hat X^{[n]}(s)\right)- f_{l}\left(Z^{[n]}(s)\right)\right| \,ds\right)\leq\\
&\leq \Lambda_{l,n}\left(\frac{nM}{\log n} \int_{0}^{t} \left|\hat X^{[n]}(s)- Z^{[n]}(s)\right|\,ds\right)\leq&\\
&\leq \Lambda_{l,n}\left(\frac{nM}{\log n} \int_{0}^{t} \Delta_{n}(s)\,ds\right)\leq \left(1+ \frac{nM}{\log n}\int_{0}^{t}\Delta_{n}(s)\,ds\right)^{\frac{1}{2}}\bar\Lambda_{l,n}
\end{align*}

By Lemma \ref{w-w}, there exist positive constants $c_{2,l}$, $\kappa_{2,l}$ and $\lambda_{2,l}$ and a non negative random variable $L_{2,l,n}$ such that

\[\begin{aligned}B_{l,n}(t)\leq \left(1+ \frac{nM}{\log n}\int_{0}^{t}\Delta_{n}(s)\,ds\right)^{\frac{1}{2}}\left( c_{2,l} \log n+L_{2,l,n} \right)
\end{aligned}\]

and

\begin{align}
\mathbb{P} \left( L_{2,l,n}  > x \right) \leq \kappa_{2,l}\, n^{-2} \exp \left( -\lambda_{2,l}\, x -\frac{x^2}{18 \log n} \right)\label{secondaqua}
\end{align}

To estimate ${C_{n}(t)}$, it is sufficient to apply the Lipschitz property \eqref{Lip2} and we get
\[{C_{n}(t)}\leq M\int_{0}^{t}\Delta_{n}(s)\,ds.\]

Our next step is to deal with ${D_{l,n}(t)}$. Given the random variables
\[\Pi_{l,n}(\delta)=\sup_{\{u,h\geq 0 \text{ s.t. } u+h\leq n\bar f_{l}T\; \& \:h\leq\delta \log n\}} \abs{N_{l}(u+h)-N_{l}(u)}\]
and
\[\bar \Pi_{l,n}=\sup_{0\leq\delta\leq n \bar f_{l} T} \Pi_{l,n}(\delta),\]We have
\[ \begin{aligned}
{D_{l,n}(t)}&\leq \Pi_{l,n}\left(\frac{n}{\log n}\left|\int_{0}^{t} \text{ab}\left(l,Z^{[n]}(t)\right) f_{l}\left(\hat X^{[n]}(s)\right)\,ds-\right.\right.\\
&\qquad\qquad\qquad\left.\left. \int_{0}^{t} \text{ab}\left(l,Z^{[n]}(t)\right) f_{l}\left(Z^{[n]}(s)\right)ds\right|\right)\leq\\
&\leq \Pi_{l,n}\left(\frac{n}{\log n}\int_{0}^{t} \left| f_{l}\left(\hat X^{[n]}(s)\right)- f_{l}\left(Z^{[n]}(s)\right)\right| \,ds\right)\leq\\
&\leq \Pi_{l,n}\left(\frac{nM}{\log n} \int_{0}^{t} \left|\hat X^{[n]}(s)- Z^{[n]}(s)\right|\,ds\right)\leq&\\
&\leq \Pi_{l,n}\left(\frac{nM}{\log n} \int_{0}^{t} \Delta_{n}(s)\,ds\right)\leq \bar\Pi_{l,n}
\end{aligned}
\]

We claim that (a proof will be given separately in Lemma \ref{NmenoN}) there exist two positive constants $c_{3,l}$, $\kappa_{3,l}$ and a non-negative random variable $L_{3,l,n}$ such that
\[D_{l,n}(t)\leq \bar \Pi_{l,n}\leq c_{3,l} \log n + L_{3,l,n}
\]

and 

\begin{equation}P (L_{3,l,n}>  x)\leq \kappa_{3,l}\frac{1}{n^{2}}\text{e}^{-x/2}.\label{terzaqua}\end{equation}


and from equation \eqref{DeltaLeq} we get
\[\begin{aligned}{}&\Delta_{n}(t)\leq M\int_{0}^{t}\Delta_{n}(s)\,ds\\
&+\sum_{l} \frac{\abs{l}}{n}\left[ (c_{1,l}+c_{3,l}) \log n+L_{1,l,n}+ L_{3,l,n} +\left(1+ \frac{nM}{\log n}\int_{0}^{t}\Delta_{n}(s)\,ds\right)^{\frac{1}{2}}\left( c_{2,l} \log n+L_{2,l,n} \right) \right] 
\end{aligned}\]
and, defining $c_{4,l}=c_{1,l}+c_{3,l}$ and $L_{4,l,n}=L_{1,l,n}+ L_{3,l,n}$, by Gronwall's  inequality  and by taking a supremum over $0\leq t \leq T$ and multiplying by $\frac{n}{\log n}$, we have
\[\begin{aligned}\frac{n}{\log n}\bar\Delta_{n}\leq &\text{e}^{MT}\sum_{l}\abs{l}\left[ c_{4,l}+\frac{L_{4,l,n}}{\log n}+ \left(\frac{1}{MT}+\frac{n}{\log n}\bar\Delta_{n}\right)^{\frac{1}{2}}\sqrt{MT} \left( c_{2,l}+\frac{L_{2,l,n}}{\log n} \right) \right].
\end{aligned}\]
As $y\leq a +b \sqrt{y}$ implies $y\leq 2a+b^{2}$, by taking $y=\frac{1}{MT}+\frac{n}{\log n}\bar\Delta_{n}$  we can show that 
\begin{align} \frac{n}{\log n}\bar\Delta_{n}\leq &\frac{1}{MT}+2\text{e}^{MT}\sum_{l}\abs{l}\left[ c_{4,l}+\frac{L_{4,l,n}}{\log n}+ MT \left( c_{2,l}+\frac{L_{2,l,n}}{\log n} \right)^{2} \right].\label{uu}
\end{align}
We  define $c_T$ as the sum of all constants in the right hand side of the inequality \eqref{uu} (which are all positive), and $L_{n,T}$ as the sum of all (proper) random variables (which are all non-negative) and rewrite equation \eqref{uu} as
\[ \frac{n}{\log n}\bar\Delta_{n}\leq  c_{T}+L_{n,T}.\]
Since $L_{n,T}$ is a finite sum, taking into account equations \eqref{primaqua}, \eqref{secondaqua} and \eqref{terzaqua}, and the inequality $P(X+Y\geq x)\leq P(X\geq x/2)+P(Y \geq x/2)$ that holds for any couple of random variables $X$ and $Y$, we have that there exist constants $\kappa_T$ and $\lambda_T$ such that
\[P(L_{n,T}<x)\leq \kappa_{T}\,  n^{-2} \text{exp}\left( -\lambda_T \sqrt{(x)} - \frac{\lambda_T\, x}{\log n}\right).\]

\end{proof}

\begin{remark}\label{relax}
The hypothesis \eqref{boundedf} and \eqref{Lip2} are given for $x,y \in E^{[n]}$ but they are too strong to include even very simple examples. Actually the result can be reformulated under the following more general setting. For fixed $K$ let $N_K =\{y\in E:\inf_{t\leq T}\abs{x(t)-y}\leq K\}$ and $\tau_n=\inf\{t:\hat X^{[n]}(t)\notin N_K \textup{ or }Z^{[n]}(t)\notin N_K\}$, hypothesis \eqref{boundedf} and \eqref{Lip2} can be reformulated as
	\begin{align*}
	&\bar f_l=\sup_{x \in N_\varepsilon} f_l(x)<\infty\\
	&\abs{f_l(x)-f_l(y)}\leq M\abs{x-y}, \quad x,y \in N_\varepsilon\\
	&\abs{F(x)-F(y)}\leq M\abs{x-y}, \quad x,y \in N_\varepsilon.
	\end{align*}
and equation \eqref{eq:main:bound} in the thesis becomes
 \begin{align*}
 \frac{n}{\log n}\sup_{t\leq T \wedge \tau_n}\, \abs{\hat X^{[n]}(t)-Z^{[n]}(t) }\leq c_{T}+L_{n,T}.
 \end{align*}
  For $n \rightarrow \infty$ we have $\mathbb{P}(\tau_n>T)\rightarrow1$.
  Moreover in most practical cases $K$ can be arbitrarily large (the Lipschitz condition is violated only at infinity): in this case we can choose it so large to guarantee that $\mathbb{P}(\tau_n<T)$ is negligible for any finite $n$ and for any finite time horizon $T$. 
\end{remark}

\begin{lemma}
	Let $X$ be Poisson distributed with intensity $\lambda$ and let $x \geq 1$. The following bound for the right tail probability holds true
	\[P(X>x)< \left(1-\text{e}^{-\lambda}\right)^{x}.\]
\end{lemma}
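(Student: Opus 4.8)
The plan is to first peel off the real variable $x$ and reduce to a bound on the tail at integer thresholds, and then to prove that integer bound by a short probabilistic argument. Since $X$ takes values in $\N$, for $x\ge 1$ write $n=\lfloor x\rfloor\ge 1$ and $m=n+1\ge 2$; then $\{X>x\}=\{X\ge m\}$, so $\P(X>x)=\P(X\ge m)$. Because $\lambda>0$ is implicit (for $\lambda=0$ the right-hand side is degenerate), the base $b:=1-e^{-\lambda}$ lies in $(0,1)$, so $b^{y}$ is strictly decreasing and $x\le n+1=m$ gives $b^{x}\ge b^{m}$. Hence it suffices to establish the discrete bound $\P(X\ge m)\le b^{m}$ for every integer $m\ge 2$, and the strict inequality of the lemma will follow.

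For the discrete bound I would use the Poisson-process representation. Realise $X$ as the number of points in $[0,1]$ of a homogeneous Poisson process of rate $\lambda$, whose inter-arrival times $E_{1},E_{2},\dots$ are i.i.d.\ exponential with $\P(E_{i}\le 1)=1-e^{-\lambda}=b$. Writing $T_{m}=E_{1}+\dots+E_{m}$ for the $m$-th arrival time, we have the exact identity $\{X\ge m\}=\{T_{m}\le 1\}$. The key observation is the set inclusion
\[
\{T_{m}\le 1\}=\Big\{\textstyle\sum_{i=1}^{m}E_{i}\le 1\Big\}\subseteq\bigcap_{i=1}^{m}\{E_{i}\le 1\},
\]
valid because the $E_{i}$ are non-negative. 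By independence the right-hand event has probability $b^{m}$, so $\P(X\ge m)=\P(T_{m}\le 1)\le b^{m}$, which is exactly the bound sought. Moreover the inclusion is strict as an event (for $m\ge 2$ the configurations with every $E_{i}$ just below $1$ have sum above $1$ and positive probability), so in fact $\P(X\ge m)<b^{m}$ for $m\ge 2$, giving strictness directly.

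Combining the two steps closes the argument: $\P(X>x)=\P(X\ge m)<b^{m}\le b^{x}=(1-e^{-\lambda})^{x}$. As an alternative to the probabilistic core, one can argue purely analytically by proving the one-step contraction $\P(X=m)\ge e^{-\lambda}\,\P(X\ge m)$, equivalently $\P(X\ge m+1)\le(1-e^{-\lambda})\,\P(X\ge m)$, and then inducting from $\P(X\ge 1)=1-e^{-\lambda}$; the contraction reduces, after factoring $\lambda^{m}/m!$ out of the tail series, to the elementary estimate $\sum_{k\ge 0}\frac{m!}{(m+k)!}\lambda^{k}\le\sum_{k\ge 0}\frac{\lambda^{k}}{k!}=e^{\lambda}$, which holds term by term since $m!/(m+k)!\le 1/k!$ for $m\ge 1$.

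I do not expect a substantive obstacle; the whole content of the lemma sits in the single containment $\{\sum_{i}E_{i}\le 1\}\subseteq\{\max_{i}E_{i}\le 1\}$ (or, in the analytic route, in the term-by-term factorial estimate). The only point requiring a little care is the real-to-integer reduction, where one must make sure the strictness of the stated inequality is accounted for: this is automatic here because it already comes from the strict event inclusion for $m\ge 2$, with the monotonicity of $b^{x}$ contributing only the remaining (non-strict) comparison $b^{m}\le b^{x}$.
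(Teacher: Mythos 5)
Your proof is correct. It shares its starting point with the paper's argument --- both pass through the arrival-time (Gamma) representation of the Poisson tail --- but the two proofs then diverge in how they obtain the actual bound. The paper writes $\P(X>x)\leq \P(Y\leq \lambda)$ with $Y$ Gamma-distributed with (real) shape parameter $x$ and closes the proof by citing an inequality of Alzer for the regularized incomplete gamma function, $\P(Y\leq\lambda)<\left(1-\text{e}^{-\lambda}\right)^{x}$ for $x\geq 1$; the passage from the real threshold to the Gamma shape is absorbed there. You instead reduce to the integer threshold $m=\lfloor x\rfloor+1$ first, using monotonicity of $y\mapsto(1-\text{e}^{-\lambda})^{y}$, and then prove the integer-shape bound from scratch via the inclusion $\{\sum_{i}E_{i}\leq 1\}\subseteq\bigcap_{i}\{E_{i}\leq 1\}$ for the i.i.d.\ exponential interarrival times, together with independence. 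What your route buys is self-containedness: the external citation is replaced by a one-line event inclusion, and the strictness required by the statement comes for free from the strictness of that inclusion when $m\geq 2$ (which is guaranteed by $x\geq 1$). What the paper's route buys is the inequality for non-integer shape directly, which is marginally stronger than what the lemma needs, since $(1-\text{e}^{-\lambda})^{m}\leq(1-\text{e}^{-\lambda})^{x}$ already bridges the gap in your version. Your secondary, purely analytic route (the one-step contraction $\P(X\geq m+1)\leq(1-\text{e}^{-\lambda})\P(X\geq m)$ with the term-by-term factorial estimate) is also sound and would serve equally well. Both of your arguments are complete proofs of the stated lemma.
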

\begin{proof}
	Recalling that a Poisson$(\lambda)$ r.~v. is identically distributed as $N_1$, where $(N_t)_{t \geq 0}$ is a Poisson process with intensity $\lambda$, the following inequality can be easily derived
	\[ P(X>x) \leq P(Y\leq \lambda),\]
	where $Y$ is Gamma distributed with shape parameter $x$ and unitary scale. The inequality reduces to an equality for integer $x$. From \cite{alzer} eq. (2.6), with a simple change of variable\footnote{using the parameter $p$ and the variable $t$ with the same notation as in \cite{alzer} you should take $x=t^p$, while $\frac{1}{p}$ becomes the shape parameter.}, we have
	\[P(Y\leq \lambda)<\left(1-\text{e}^{-\lambda}\right)^{x},\]
	as far as $x\geq1$.
\end{proof}

\begin{lemma}\label{NmenoN}
	Given the random variables
	\[\Pi_{l,n}(\delta)=\sup_{\{u,h\geq 0 \text{s.t. } u+h\leq n\bar f_{l}T\; \& \:h\leq\delta \log n\}} \abs{N_{l}(u+h)-N_{l}(u)}\]
	and
	\[\bar \Pi_{l,n}=\sup_{0\leq\delta\leq n \bar f_{l} T} \Pi_{l,n}(\delta),\]
	there exist positive constants $c_{3,l}$ and $\kappa_{3,l}$ such that
	\[ P (\bar \Pi_{l,n}> c_{3,l} \log n + x)\leq \kappa_{3,l}\frac{1}{n^{2}}\text{e}^{-x/2}.\]
	Defining the non negative r.v.
	\[L^{3}_{l,n}=(\bar \Pi_{l,n}- c_{3,l}\log n)_{+} \]
	we have
	\begin{equation} P (L^{3}_{l,n}>  x)\leq \kappa_{3,l}\frac{1}{n^{2}}\text{e}^{-x/2}.\label{L3}\end{equation}
\end{lemma}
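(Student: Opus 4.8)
The plan is to collapse the two suprema defining $\bar\Pi_{l,n}$ into a maximum over finitely many Poisson increments, to bound each increment with the elementary tail estimate of the preceding lemma, and to pay for the maxima with union bounds whose combinatorial cost is absorbed by taking the constant $c_{3,l}$ large.

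First I would use that the unit-rate process $N_l$ is non-decreasing. For a window $[u,u+h]$ with $h\leq\delta\log n$ and $u+h\leq n\bar f_lT$, set $t_j=j\,\delta\log n$ and pick $j$ with $t_j\leq u$; then $u+h<t_{j+2}$, so monotonicity gives $N_l(u+h)-N_l(u)\leq N_l(t_{j+2})-N_l(t_j)$. Partitioning $[0,n\bar f_lT]$ into the $J_\delta=\lceil n\bar f_lT/(\delta\log n)\rceil$ cells of width $\delta\log n$, this yields the pathwise bound $\Pi_{l,n}(\delta)\leq\max_{1\leq j\leq J_\delta}\zeta_j$, where each $\zeta_j=N_l(t_{j+2})-N_l(t_j)$ is Poisson of intensity $2\delta\log n$ and $J_\delta\leq n\bar f_lT$. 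This turns a supremum over a continuum of windows into a maximum over at most $O(n)$ increments.

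Next, at a fixed scale $\delta$, I would control a single $\zeta_j$. Combining the preceding lemma with the standard Chernoff estimate for the Poisson law shows that, whenever the threshold $c\log n+x$ sits above the mean $2\delta\log n$, the single-cell probability $\P(\zeta_j>c\log n+x)$ decays at least like $n^{-\alpha c}\mathrm{e}^{-x/2}$ for an absolute constant $\alpha>0$. A union bound over the $J_\delta\leq n\bar f_lT$ cells costs a factor $n$, so choosing $c_{3,l}$ with $\alpha c_{3,l}\geq 3$ produces $\P\!\left(\Pi_{l,n}(\delta)>c_{3,l}\log n+x\right)\leq\kappa\,n^{-2}\mathrm{e}^{-x/2}$.

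Finally I would remove the dependence on $\delta$. Since $\delta\mapsto\Pi_{l,n}(\delta)$ is non-decreasing, it suffices to union-bound the previous estimate over the geometric grid $\delta_k=2^k$, $0\leq k\leq\log_2(n\bar f_lT)$, of which there are $O(\log n)$; monotonicity fills the gaps between consecutive scales and the extra $O(\log n)$ factor is harmless against the $n^{-2}$ gain. Setting $L_{3,l,n}=(\bar\Pi_{l,n}-c_{3,l}\log n)_+$ then gives \eqref{L3}. The main obstacle is exactly this last step: the cell mean $2\delta\log n$ grows with the scale, so the margin $c_{3,l}\log n$ by which the threshold exceeds the mean shrinks as $\delta$ increases, and keeping $c_{3,l}$ and $\kappa_{3,l}$ independent of $n$ uniformly over the whole range $0\leq\delta\leq n\bar f_lT$ is the delicate point; it forces the discretization width to track the window size $\delta\log n$ rather than to be fixed, and it is where the balance between the number of cells and the per-cell Poisson tail must be struck most carefully.
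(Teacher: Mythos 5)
Your fixed-$\delta$ argument is sound and coincides with the paper's own: cells of mesh $\delta\log n$, domination of an arbitrary window by a double cell (the paper's factor $2$), stationarity plus a union bound over the $O(n)$ cells, and a single-cell tail bound in which monotonicity of $N_l$ reduces the running supremum to the endpoint value $N_l(\delta\log n)$ (a minor point both you and the paper gloss over: the cell count is $O(n)$ only for, say, $\delta\geq 1$; smaller $\delta$ is absorbed by monotonicity of $\Pi_{l,n}(\delta)$ in $\delta$). The genuine gap is the step you yourself flag: your single-cell estimate exists only ``whenever the threshold $c\log n+x$ sits above the mean $2\delta\log n$'', i.e.\ only for $\delta\lesssim c_{3,l}$, so your dyadic union over $\delta_k=2^k$ up to $n\bar f_l T$ has nothing to sum beyond the first few scales; monotonicity interpolates between scales at which you have a bound, but it cannot cross into the regime where none exists. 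Moreover, no choice of $c_{3,l},\kappa_{3,l}$ can close this: at $\delta=n\bar f_l T$ the admissible windows include $u=0$, $h=n\bar f_l T$, hence $\bar\Pi_{l,n}\geq N_l(n\bar f_l T)$, a Poisson variable of mean $n\bar f_l T$, so for fixed $x$ the left-hand side of the claimed inequality tends to $1$ while the right-hand side tends to $0$. The increments here are uncompensated --- they grow like the window length $h$, not like $\sqrt{h}$ --- which is exactly why the $(\delta+1)^{-1/2}$ normalization that makes Lemma \ref{w-w} true has no counterpart in the statement you were asked to prove.

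For comparison: the paper's proof follows your route essentially verbatim through the fixed-$\delta$ bound, and then asserts uniformity in $\delta$ via the chain $k_n(1-\mathrm{e}^{-\delta\log n})^{(c\log n+x)/2}\leq k_n\,\mathrm{e}^{-\frac{c\log n+x}{2n^{\delta}}}\leq k_n\,\mathrm{e}^{-(c\log n+x)/2}$, whose final inequality requires $n^{\delta}\leq 1$ and therefore reverses for every $\delta>0$; the point you singled out as delicate is precisely where the published argument breaks as well. What your method does deliver, with constants depending on $\delta_0$, is the stated tail bound uniformly over $0\leq\delta\leq\delta_0$ for any fixed $\delta_0$ --- and this is what the application in Theorem \ref{th:main} morally requires, since by the heuristic in the paper's footnote the argument fed to $\Pi_{l,n}$ is $O_P(1)$, so one could localize on the event $\bigl\{\frac{nM}{\log n}\int_0^T\Delta_n(s)\,ds\leq\delta_0\bigr\}$ and bootstrap; alternatively one could compensate the Poisson processes and prove a $(\delta+1)^{-1/2}$-normalized analogue of Lemma \ref{w-w}. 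As written, though, your proposal does not --- and cannot --- establish the lemma over the full range $0\leq\delta\leq n\bar f_l T$, and your closing remark correctly identifies the obstruction without resolving it.
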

\begin{proof}
	Suitably subdividing the interval we can force the $\sup$ over the variable $u$ to run only over integers. We get for $k_{n}=\left\lfloor\frac{n\bar f T}{\delta \log n}\right\rfloor+1$
	\[\Pi_{l,n}(\delta) \leq 2 \sup_{\{k\leq k_{n} \& \:h\leq\delta \log n\}} \abs{N_{l}(k\delta\log n+h)-N_{l}(k\delta\log n)}.\]
	Now, the probability that one of the arguments of the sup is larger than a given value is lesser or equal to the probability that at least one the arguments is larger then the same value, by  sub-additivity. Moreover, the arguments of the sup are stationary increments. Thus all such probabilities are the same, then
	
	\[\begin{aligned}P \left(\Pi_{l,n}(\delta)> c \log n + x\right) &\leq k_{n} \; P\left[ \sup_{h\leq \delta \log n} \abs{N_{l}(h)} > \frac{1}{2}( c \log n + x)\right] \\
	&=k_{n}\; P \left[  N_{l}(\delta \log n) > \frac{1}{2}( c \log n + x )\right] \\
	&\leq k_{n} (1-\text{e}^{-\delta \log n})^{(c \log n + x)/2}\\
	&= k_{n} \text{e}^{\log(1-n^{-\delta}) (c\log n +x)/2}\\
	&\leq  k_{n}\text{e}^{-\frac{c\log n +x}{2 n^{\delta}}}\\
	&\leq k_{n}\text{e}^{-(c\log n +x)/2}
	\end{aligned}\]
	and since $c$ is arbitrary and $k_{n}=O(n)$, there exist two positive constants $c_{3,l}$ and $\kappa_{3,l}$ such that for every positive $\delta$
	\[P \left(\Pi_{l,n}(\delta)> c_{3,l} \log n + x\right) \leq \kappa_{3,l} \frac{1}{n^{2}}\text{e}^{-x/2}\]
	and since the bound is uniform in $\delta$ we also have
	\[P \left(\bar \Pi_{l,n}> c_{3,l} \log n + x\right) \leq \kappa_{3,l} \frac{1}{n^{2}}\text{e}^{-x/2}\]
	and \eqref{L3} immediately follows.
	%

\end{proof}

\section{Reaction networks and examples}\label{nasty}

A prominent application of density dependent families of Markov chains is to model reaction networks (cf. \cite{andersonKurtzBook,andersonKurtz, kurtzbook}).
A reaction network is a triple $\{\mathcal{S},\mathcal{C}, \mathcal{R}\}$ where
\begin{enumerate}
	\item $\mathcal{S}=\{S_1,\cdots, S_d\}$ is the set of species of cardinality $d$.
	\item $\mathcal{C}$ is the set of complexes, consisting of some nonnegative linear combination of the species
	\item $\mathcal{R}$ that is a finite set of ordered couples of complexes that are often rendered graphically by  arrows.
	\end{enumerate}
Reaction networks are usually specified by writing the corresponding stoichiometric equations.
For each reaction we write
\begin{equation}\sum c_{i} S_i \rightarrow \sum {c'}_{i} S_i\label{reactionk}\end{equation}
if such reaction consumes the complex $\sum c_{i} S_i$ to produce the complex $\sum {c'}_{i} S_i$. The current state of the network is encoded into the vector $s=(s_1,\cdots,s_d)$ which counts how many molecules of each of the species are available in the system and the  reaction \eqref{reactionk} has the effect of updating the state of the network to $s+l=(s_1+l_{1},\cdots,s_n+l_{n})$ where each state-increment vector $l$ can be calculated as $l={c'}-c$.

The reactions are assumed to be events of a Markov chain. In the case when each reaction causes a different state-increment, we identify the reactions with their state increment and we say that the network follows mass-action kinetics if the rate of each reaction is in the form
\begin{equation}q_{s,s+l}=\frac{\lambda_l}{V^{\langle c \rangle-1}}\prod_{i=1}^d \binom{s_i}{c_{i}}= V\left[\frac{\lambda_l}{\prod_{i=1}^d c_i !}\prod_{i=1}^d \left(\frac{s_i}{V}\right)^{c_i} +O\left(\frac{1}{V}\right)\right] \label{massaction}\end{equation}
where, for vectors $v\in \mathbb{R}^d$ we denoted by $\langle v \rangle$ the scalar $\langle v \rangle=\sum_i v_i$, and where $\lambda_l$ are arbitrary constants related to the so called propensity of each reaction (the higher the constant the more frequent is the reaction).

In some case there might be different reactions (say 2), both with mass-action kinetics, that cause the same state increment. In such a case there is no need to distinguish which one of the two reaction is really occurring, and the rate related to the state increment is the sum $q_{s,s+l_k}=q^1_{s,s+l_k}+q^2_{s,s+l_k}$ of the individual rates $q^1_{s,s+l_k}$ and $q^2_{s,s+l_k}$ of the two reactions, both sharing mass-action form \eqref{massaction}.

As apparent from condition \eqref{massaction}, mass-action  rates are, at least approximatively, in the density dependent form \eqref{rates}, where the the indexing parameter denoted by $[n]$  in equation\eqref{rates}  is played by the volume of reaction $V$. The density process \eqref{density} is then a model for concentrations and can be approximated in the different ways we have illustated.

In the diffusion approximation \eqref{diff} of Chapter \ref{sec:background} the different reactions sharing the same  state increment $l$ are still notationally identified by $l$. The effect of each reaction (or reactions group), however, is not anymore encoded in a sudden state change (jump), but rather affects the state of the process continuously in time by an infinitesimal change proportional to $l$. In particular, each summand in equation \eqref{diff} accounts for the specific effect of the reaction (or reactions group) $l$.

In modeling reaction networks, concentrations are  not allowed to become negative and moreover the conservation of the total mass in most cases imposes an upper bound to the concentrations which is determined by the initial condition. It has been already noticed that the diffusion approximation may fail to fulfill such constraints, while our new jump-diffusion process \eqref{eq:main:Z} can be safely adopted.

The summands in equation \eqref{eq:main:Z} still model the effect of each single reaction (of group of reactions with same effect). If no chemical species has null or maximal concentration the effect of the reactions are continuously compounded. However as soon as the concentration of a species becomes null (or maximal), only those reactions whose effect is to increase (decrease) the concentration of such species (so that it can leave the boundary) start to act discretely again (by jumps). Meanwhile, all the other reactions that do not affect the concentration of the specific chemical having vanishing (or maximal) concentration are still continuously compounded. Once the boundary is left by a jump, all the related reactions return to have a continuously compounded effect.

\subsection{Simulations}

 A simulation algorithm for the approximating jump-diffusion process is exposed in \cite{angius2015approximate, ATPN14}). A slightly naive version of it is
 \begin{itemize}
 	\item fix a provisional discretization step $\delta$
 	\item check what components of the process are at the boundary (if any) and consequently decide which are the reactions to by simulated discretely
 	\item simulate the time $\tau$ at which the first reaction occur
 	\item if $\tau<\delta$ then update the state accounting for the effect of the reaction 
 	\item account for the effect of the continuously approximated reactions with an Euler approximation with step $\min(\tau,\delta)$.
 	\item iterate.
 	\end{itemize}
 
The case studies addressed in \cite{angius2015approximate, ATPN14} give a large body of evidence that simulating the approximating process $Z^{[V]}(t)$ with this naive algorithm or with some obvious step-adaptation is in most cases much faster then simulating the process $Y^{[V]}(t)$ directly. 
We do not want to add much here, but we warn the reader that in some nasty example (as the two introduced in the next subsections) an Euler method with a fixed step-size may not be suitable unless more sophisticated considerations are taken into account. Indeed:
\begin{itemize} 
	\item when the process is close to the boundary, a very small discretization step $\delta$ may be needed in order to avoid missing some crossings of the boundaries that could cause a large loss of precision
	\item if the process spends almost all of the time at the boundary, the real step at which our algorithm  proceeds can in some case be as short as that of the original chain with an overload due to the decision of which of the reactions act discretely and to the  simulation of the Brownian motions for the continuous ones.
\end{itemize}
While the first issue may be addressed by some better simulation algorithm (cf. \cite{baldi2014}), the second is structural, but it is important only if the reactions which may lead the process out of the boundary are among the fastest in the network.
We remark that this situations are not so common and that in most examples (cf. \cite{angius2015approximate, ATPN14,new}) the computational cost of our algorithm is much advantageous with respect to the standard stochastic simulation algorithms for Markov chains. Moreover such an algorithm is still subject to many possible improvements, such as simulating an equivalent diffusions with the minimal number of Brownian motion (cf.\cite{allenEquivalent}), and applyig the results of \cite{baldi2014} and \cite{gobet} which could allow us to use  a larger Euler step even close to the boundaries without loosing accuracy. We also remark that speeding up simulations is not the only goal of an approximation method, e.g. in \cite{angius2015approximate} we also used numerical solution of the Fokker-Plank equation of the approximating process in a couple of one-dimensional examples.
In the example that follows we do not take care at all of the computational cost, but simply evaluate the precision of the approximation in some complicated example where the process stays in the vicinity of the boundaries for most of the time. We choose very small simulation steps, regardless to the fact that in those examples the time required for the simulations can be even longer than that for the Markov chain with the aim of demonstrating that even is such.

\subsection{A nasty example}

The aim of this Section is to test the validity of the approximation in a particularly hard example that combines non-linearity and stiffness and for which the very frequent visits to the boundary of the state space,  have a dramatic impact on the overall dynamics of the system.
The reaction network was first described in \cite{TogashiPRL} and the authors explicitly point out that not only the deterministic continuous approximation fail to catch the behavior of the system but that it cannot be described by diffusion processes as well due to the effect of the so called Discretely Induced Transitions that we are going to describe below). The network is composed by four chemical species $S_i$ with $Y_i$ molecules each, $i\in \{1,\ldots,4\}$ that undergo a loop of autocatalytic reactions
\begin{equation} S_i+S_{(i+1) \text{ mod } 4}\stackrel{1}{\longrightarrow} 2 S_{(i+1) \text{ mod } 4} \label{ciclo} \end{equation}

within a container which is kept in contact with a reservoir of each of the chemical species at constant concentration with molecules flowing in and out according to
\begin{equation} S_i\stackrel{D}{\longrightarrow} \emptyset\label{out}\end{equation}
and
\begin{equation} \emptyset \stackrel{D}{\longrightarrow} S_i.\label{in}\end{equation}

The volume of the container is denoted by $V$ and the process $X^{[V]}(t)=\frac{1}{V}\Big(Y_1(t),Y_2(t),Y_3(t),Y_4(t)\Big)$ of the concentrations at time $t$ belongs to a density dependent family of MCs indexed by $V$. The state space is a discrete lattice, subset of the positive orthant $\mathbb{R}_+^4$ (including the hyperplanes when one or more coordinates are vanishing), that becomes thicker when $V\rightarrow\infty$. In Table \ref{tabella} for each of the 12 reactions we display the increments $l$, the reaction rates $q_{y,y+l}$ and the functions $f_l(x)$ of equation \eqref{rates}. 

\begin{table}[h]

\setlength{\tabcolsep}{0pt}
\renewcommand{\arraystretch}{1.2}

\begin{tabular}{c|ccccccccccccc}\hline
	\text{reac.}&\phantom{$I$}&1&2&3&4&5&6&7&8&9&10&11&12\\ \hline\hline
	$l$&\phantom{$\Big|^\Big|$}\hspace{-2mm}&$\left(\hspace{-.7mm}\begin{smallmatrix}-1\\1\\0\\0\end{smallmatrix}\right)$
	& $\left(\hspace{-.7mm}\begin{smallmatrix}0\\-1\\1\\0\end{smallmatrix}\right)$
	& $\left(\hspace{-.7mm}\begin{smallmatrix}0\\0\\-1\\1\end{smallmatrix}\right)$
	& $\left(\hspace{-.7mm}\begin{smallmatrix}1\\0\\0\\-1\end{smallmatrix}\right)$
	& $\left(\begin{smallmatrix}1\\0\\0\\0\end{smallmatrix}\right)$
	& $\left(\begin{smallmatrix}0\\1\\0\\0\end{smallmatrix}\right)$
	& $\left(\begin{smallmatrix}0\\0\\1\\0\end{smallmatrix}\right)$
	& $\left(\begin{smallmatrix}0\\0\\0\\1\end{smallmatrix}\right)$
	& $\left(\hspace{-.7mm}\begin{smallmatrix}-1\\0\\0\\0\end{smallmatrix}\right)$
	& $\left(\hspace{-.7mm}\begin{smallmatrix}0\\-1\\0\\0\end{smallmatrix}\right)$
	& $\left(\hspace{-.7mm}\begin{smallmatrix}0\\0\\-1\\0\end{smallmatrix}\right)$
	& $\left(\hspace{-.7mm}\begin{smallmatrix}0\\0\\0\\-1\end{smallmatrix}\right)$\\
		$q_{y,y+l\phantom{u}}$&$\phantom{\Bigg|^\big|}$&$\displaystyle{\frac{y_1 y_2}{V}}$ & $\displaystyle{\frac{y_2 y_3}{V}}$ & $\displaystyle{\frac{y_3 y_4}{V}}$ & $\displaystyle{\frac{y_4 y_1}{V}}$ & $V\hspace{-.7mm}D$ & $V\hspace{-.7mm}D$ & $V\hspace{-.7mm}D$ & $V\hspace{-.7mm}D$ & $ D y_1$&$ D y_2$&$ D y_3$&$ D y_4$\\
	$f_l(x)$& & $x_1 x_2$ & $x_2 x_3$ & $x_3 x_4$ & $x_4 x_1$ & $D$ & $D$ & $D$ & $D$ & $x_1$&$x_2$&$x_3$&$ x_4$ \\ \hline
\end{tabular}
\caption{The network of reactions \eqref{ciclo}, \eqref{in} and \eqref{out} as a density dependent family of MCs\label{tabella}}
\end{table}

The dynamics of the system for moderate values of the volume $V$ is rather difficult to guess intuitively. From simulations it appears that there is an alternation of patterns where two non consecutive chemical species are extinct or nearly extinct and the other two are more or less abundant (so called ``1-3 rich" or ``2-4 rich" patterns), moreover within the same pattern (``1-3 rich" form example), the system switches between two different sub-patterns where the non-extinct species are one abundant the other rare (say in pattern ``1-3 rich" there are sub-patterns 1A3R, where 1 is abundant, 3 rare and 1R3A). Transitions between sub-patterns (from 1A3R to 1R3A, for example) are much more frequent then transitions between patterns (from ``1-3 rich" to ``2-4 rich" and vice versa), but both kinds of transitions are driven by the event that one or a few molecules of the formerly extinct species enters the cycle and triggers with high probability a very fast cascade of events that is described in the original papers and that leads to a transition of one of the 2 possible kinds. For such transitions the authors coined the name DIT (Discretely Induced Transitions) since the single discrete event of the flow from the reservoirs of a molecule of a formerly extinct species may be responsible of a macroscopic event like a transition between different dynamic patterns or sub-patterns. Such transitions would be impossible in a system described by ODEs or by a diffusion process. To investigate the presence of such a switching behavior at different values of the volume $V$, in \cite{TogashiPRL} the authors suggested to consider the distribution at a large fixed time $t$ of the random variable
\begin{equation}U^{[V]}(t)=X^{[V]}_1(t) + X^{[V]}_3(t) - \left(X^{[V]}_2(t)+X^{[V]}_4(t)\right).\label{UU}\end{equation}
 While in ``1-3 rich" patterns, $U^{[V]}(t)$ is positive, in ``2-4 rich" patterns it assumes negative values. When $V<1/D$ the system keeps alternating the two patterns and the distribution of $U^{[V]}(t)$ is bimodal. For larger values of $V$ ($V=1/D$ for example) such distribution flattens and the bimodality gets replaced by a large plateaux. For even larger $V$ the noise starts not to be sufficient any more for the boundaries to be reached and than the distribution becomes unimodal with a peak at zero that sharpens with growing $V$ (the deterministic limit for $V\rightarrow\infty$ holds and in such regimen the classical approximation with continuous SDEs starts to work properly). 

 We aim at showing that it is actually possible to recover the behavior of the system even for moderate $V$ using the approximating process $Z^{[V]}(t)$ described in Section \ref{robabella} that is a continuous SDEs (thus a ``fluid" approximation) in the interior of the state space, but has positive occupation times and jumps at the boundaries where one or more chemical species are extinct. In the approximating process, ``discreteness" is kept only through the behavior at the boundaries, but the dynamics of the network is correctly reproduced.
 
 In Fig \ref{fig1} we display a simulated trajectory for each of the four components of the approximating process  $Z^{[V]}(t)$. The parameters are set to $V=32$, $D=1/256$, $Z^{[V]}(0)=(3,0,1,0)$. Around time $t=410$ we notice a switch between a ``1-3 rich" pattern to a ``2-4 rich" pattern. Switches between subpatterns are also easily detectable.
 
 In Figure \ref{fig2}, instead we plot a kernel density estimation of the variables 
 \begin{equation}\tilde U^{[V]}(t)=Z^{[V]}_1(t) + Z^{[V]}_3(t) - \left(Z^{[V]}_2(t)+Z^{[V]}_4(t)\right)\label{Utilde}\end{equation}
 derived from the approximated process, compared with the original counterpart $U^{[V]}(t)$ defined in equation \eqref{UU} simulated with the standard Gillespie simulation algorithm. The parameters are set to $V=128$, $D=1/256$, $Z^{[V]}(0)=(1,1,1,1)$ and the time at which positions are recorded is $t=2500$ which seems large enough not to feel the influence of the initial condition any more.  The agreement is very good.
 
 \begin{figure}[tbp]
 	\centering
 	\includegraphics[width=\textwidth]{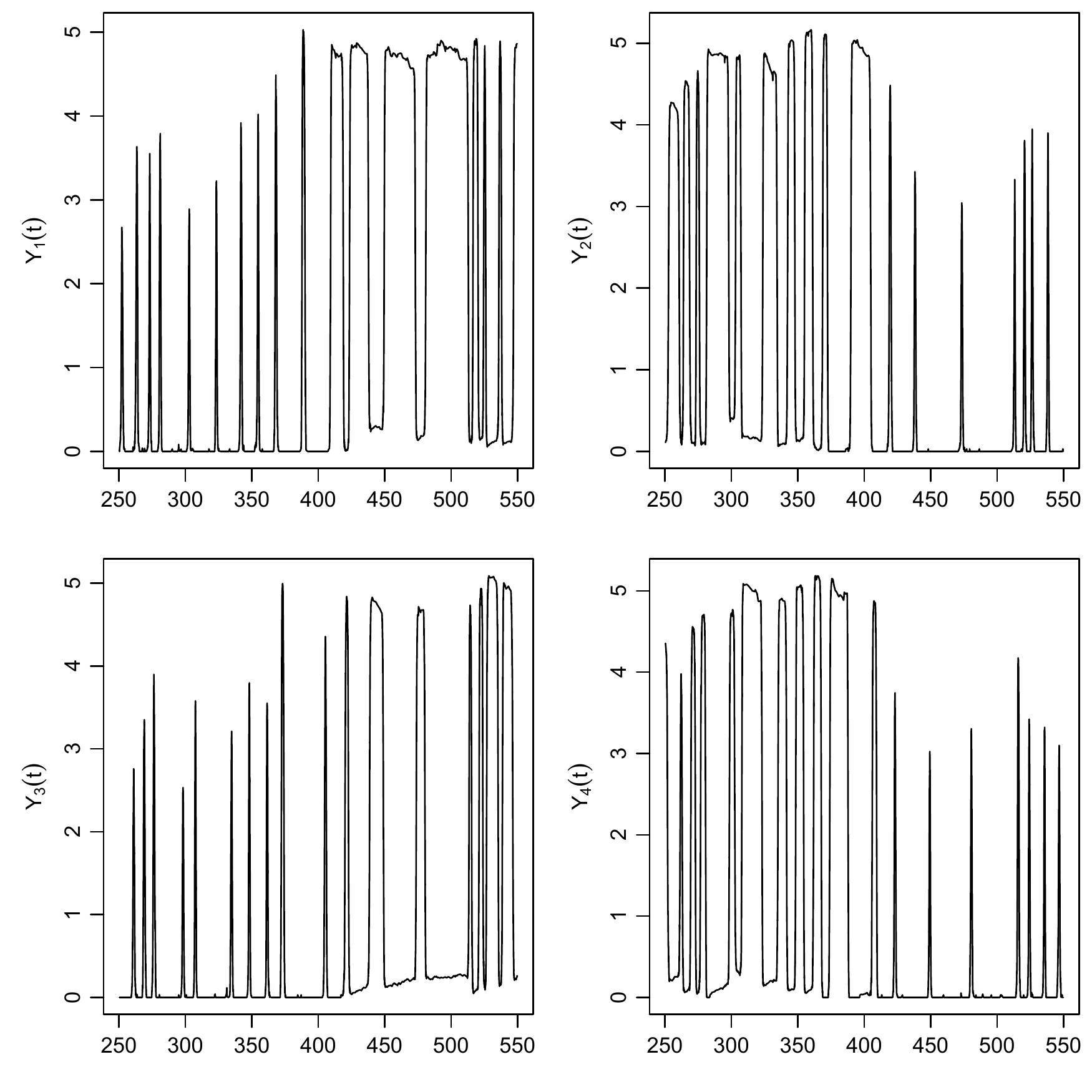}
 	\caption{A trajectory for each component of the approximating process $Z^{[32]}(t)$. Parameters are $V=32$, $D=1/256$ and $Z^{[32]}(t)= (3,0,1,0)$.\label{fig1}}
 \end{figure}
 
 \begin{figure}[tbp]
 	\centering
 	\includegraphics[width=.8\textwidth]{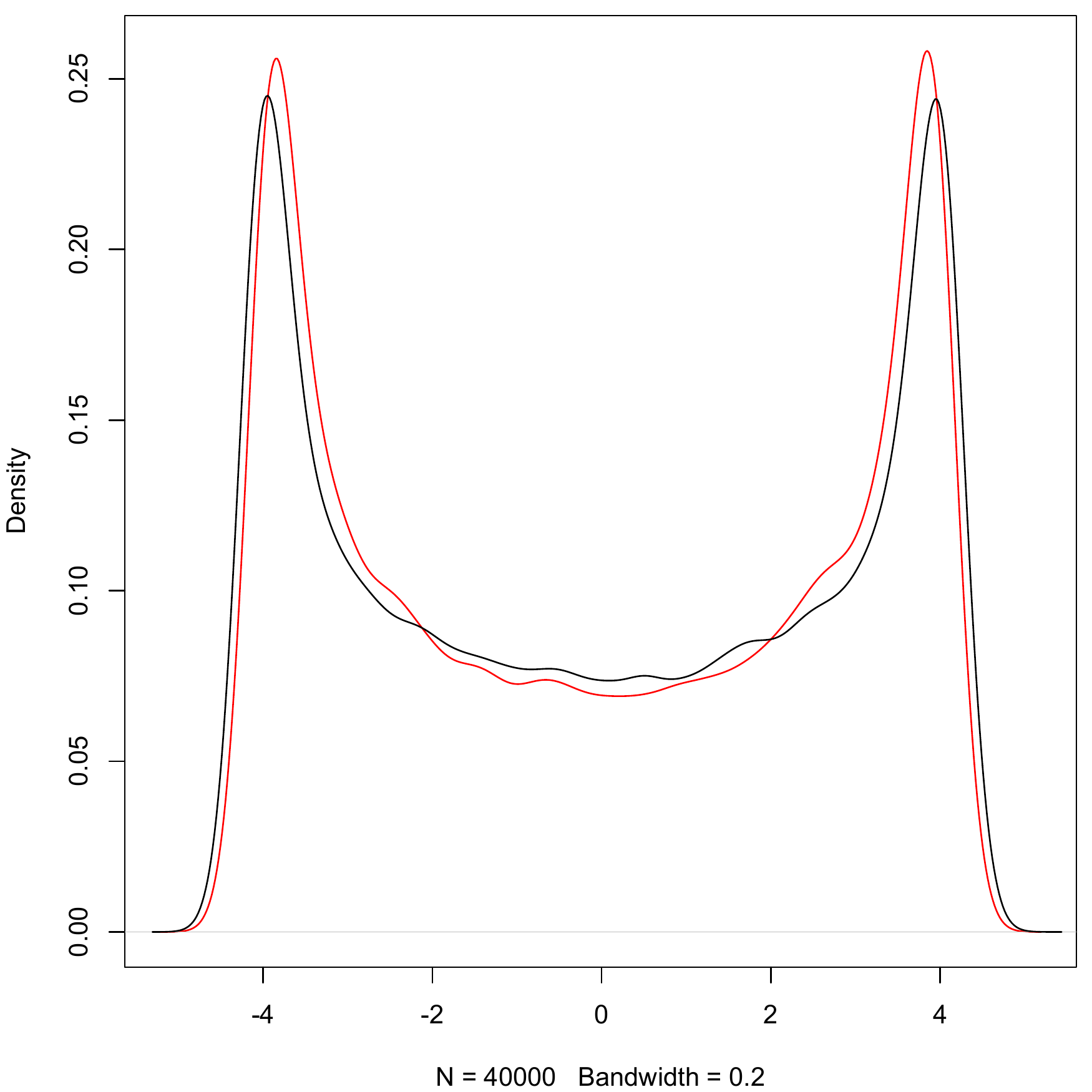}
 	\caption{Non parametric density estimation of the variables $U^{[V]}(t)$ (in red, derived from the original density process, cf. formula \eqref{UU}) and of $\tilde U^{[V]}(t)$  (in black, derived from the approximating jump-diffusion process, cf. formula \eqref{Utilde}) from a sample of 40.000 simulated paths. Parameters are set to $V=128$, $D=1/256$, $t=2500$, $Z^{[V]}(0)=(1,1,1,1)$.\label{fig2}}
 \end{figure}
 
\subsection{A further example and a comparison with Constrained Langevin Equations approach}\label{ruth}
In this last Session we replicate with our machinery an example that was proposed in \cite{ruth2017constrained} (Example 3) and studied in terms of the so-called Constrained Langevin equations. We refer to the preprint which is currently available at Ruth Williams web site. The reactions are:

\begin{align}
S_1 &\mathrel{\mathop{\rightleftarrows}^{{c_1}}_{{c_2}}} \emptyset,& S_2 &\mathrel{\mathop{\rightleftarrows}^{{c_3}}_{{c_4}}} \emptyset,& S_3 &\mathrel{\mathop{\rightleftarrows}^{{c_5}}_{{c_6}}} \emptyset,\\
S_3 +S_2&\mathrel{\mathop{\rightarrow}^{{c_7}}} 2 S_1,& 2S_1&\mathrel{\mathop{\rightarrow}^{{c_8}}} S_1+S_2,&S_1 +S_2&\mathrel{\mathop{\rightarrow}^{{c_9}}}  S_2
\end{align}

The vector of the reaction rate constants is set to 
\[c=\left(\frac{1}{\sqrt{10}}, 0.01,1,0.01,1,10,\frac{4}{5},1,\frac{3}{2\sqrt{10}} \right),\]
	while $V=100$. 
	
	The corresponding deterministic models is bistable with the following stable equilibria
	\[\begin{aligned}
	x_1&=(0.12679,2.90328\cdot 10^{-3},9.97683)\\
	x_2&=(2.96686,2.31681,3.50454)
	\end{aligned}\]
	
	In the stochastic system trajectories which are initialized at $x_0=(0.1,0.1,10)$, thus in the vicinity of $x_1$, tend to wander around such equilibrium point until the noise brings them into the domain of attraction of $x_2$ around which they spend a much longer time. We stop the simulations at $T=100$, the majority of the trajectories are already around $x_2$, but a considerable number is still close to $x_1$ (nearly the 40\%). The time step for the Euler discretization for the jump-diffusion is fixed to $0.005$ when each component of the process is either larger then $6/V$ (thus sufficiently far away from 0) or at 0. If there are components of the process in the region between $3/V$ and $6/V$ we reduce the step to $0.0007$, and if any component is in the region between $10^-14$ and $3/V$ we reduce it further to $0.0002$. Such a reduction is needed not to miss hidden crossing of the boundary between to discretized observations that would have a strong impact on the quality of the approximation.
	A trajectory of the density process $X^{[V]}(t)$ and an independent one of the approximating jump-diffusion $Z^{[V]}(t)$ are plotted in Figure \ref{fig3}. Let us remark that the two trajectories are not coupled so to satisfy Theorem \ref{th:main}, but still they both display the qualitative behavior described above.
	
	\begin{figure}[tbp]
		\centering
		\includegraphics[width=1\textwidth]{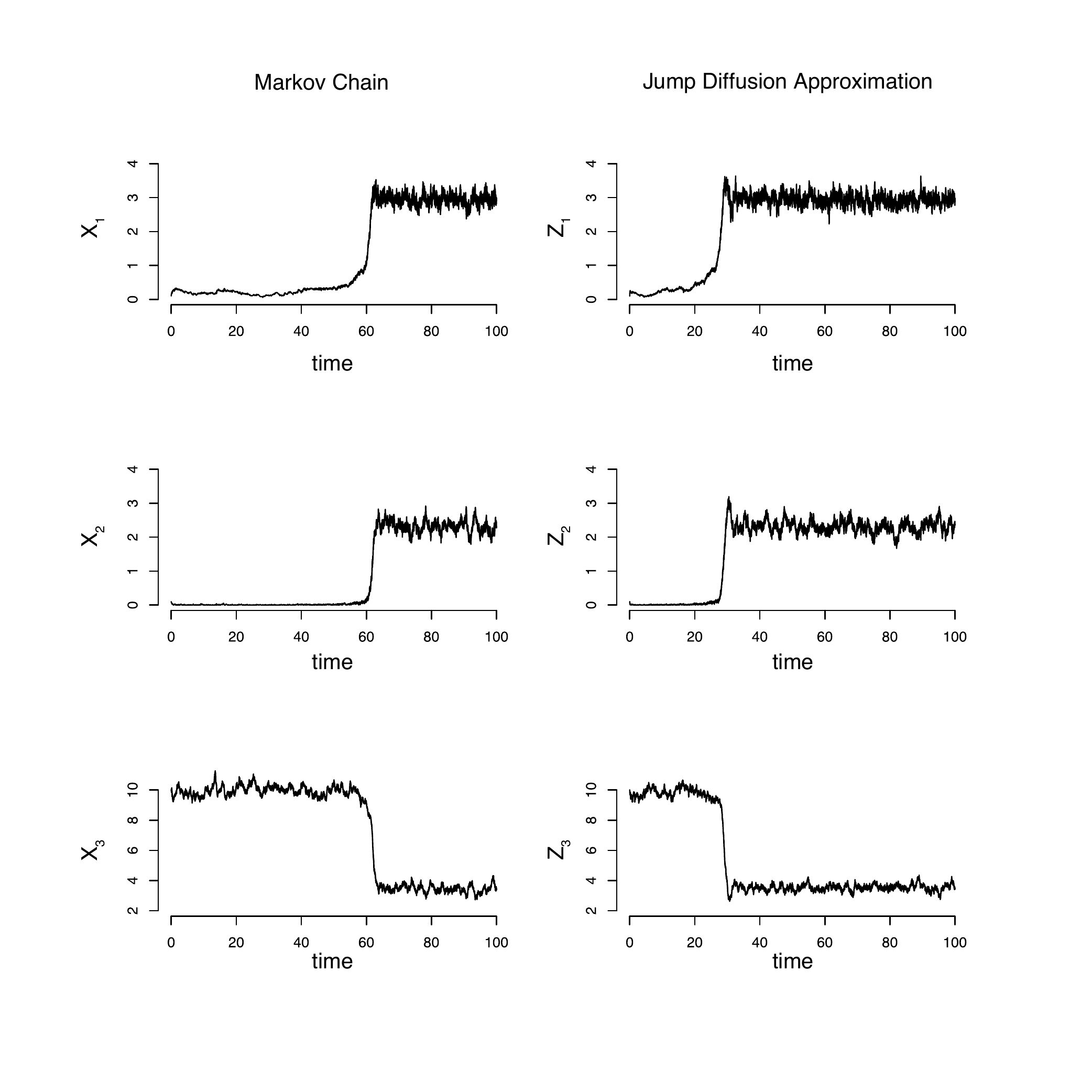}
		\caption{A trajectory of the 3 components of the density process $X^{[V]}(t)$ and an independent one of the jump-diffusion approximation. The two are not coupled so to satisfy Theorem \ref{th:main}, but they show the same qualitative behavior described in the text. \label{fig3}}.
	\end{figure}
	
	In Figure \ref{fig4}, we plot a \emph{heat} scatter plot of the collection of points obtained by sampling the processes every time interval of size $0.2$ over 400 repetitions. Both the density process  $X^{[V]}(t)$, the jump diffusion $Z^{[V]}(t)$ and the Constrained Langevin Equation are reported. The Constrained Langevin Equation is simulated with an Euler scheme with fixed step 0.005 with the same code that was used in \cite{ruth2017constrained} and described there in more details. We thank the authors for having agreed to share their code.  Again the agreement is very good. 

\begin{figure}[tbp]
	\centering
	\includegraphics[width=\textwidth]{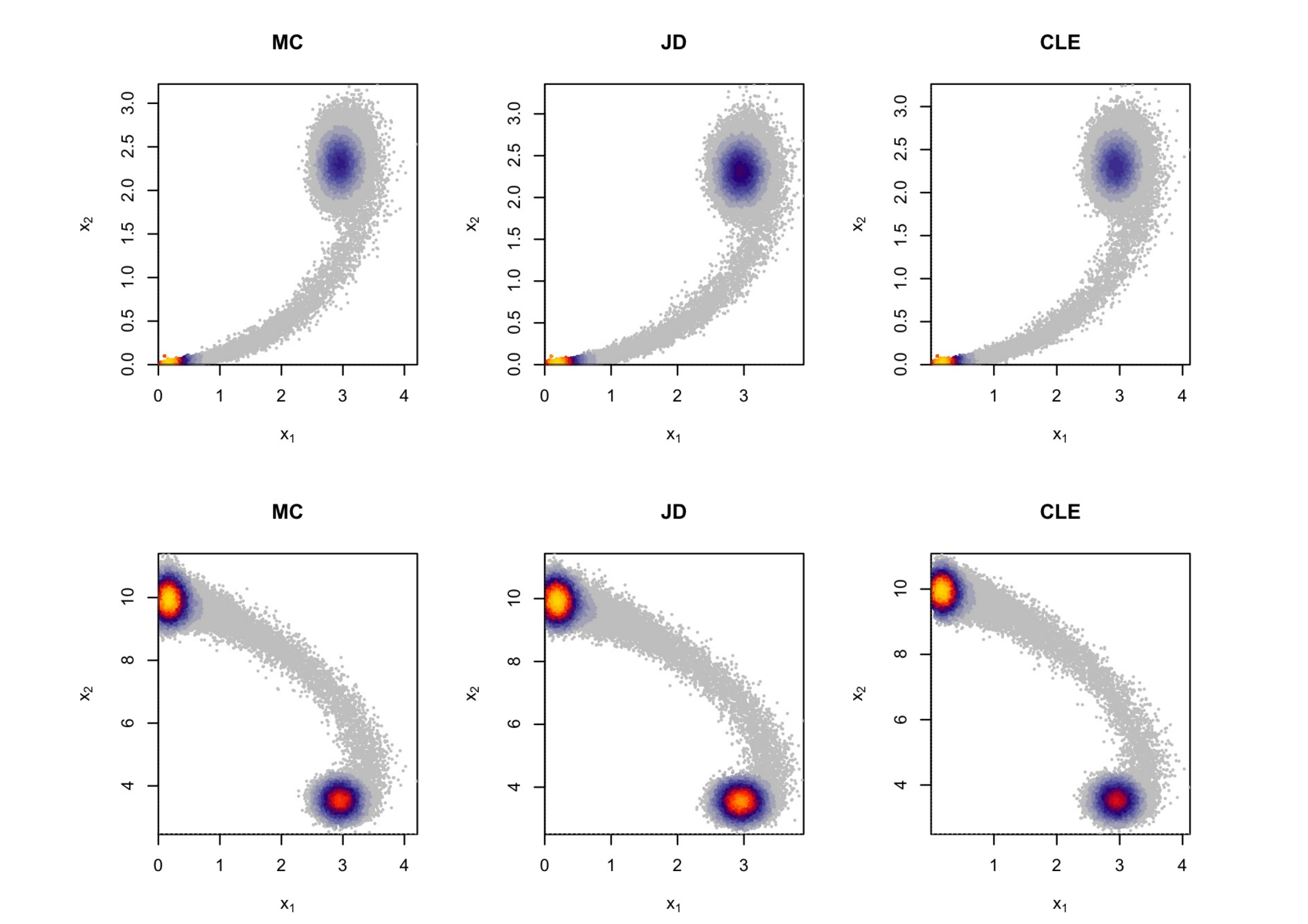}
	\caption{\emph{Heat} scatterplot of points generated from 400 trajectories of the density process (MC), the jump-diffusion approximation (JD) and the Constrained Langevin equations approximation (CLE) sampled every time interval of length 0.2.\label{fig4}}.
\end{figure}

\section{Acknowledgements}
We thank Tom Kurtz for having found a relevant mistake in the first draft of this paper and for having  given some suggestions on how to correct it. We also thank Cristina Costantini for having been very kind and helpful in different occasions.
We thank Ruth Williams and Saul Leite for having agreed to share their code for simulating the example in Section \ref{ruth} with the Constrained Langevin Equations approach, and for producing Figure \ref{fig4}.

\bibliographystyle{apalike}
\bibliography{Bibl}

\end{document}